\renewcommand{\arraystretch}{1.2} 
\title{\textbf{A Generalized Waist Problem: Optimality Condition and Algorithm}}
\author[1]{Triloki Nath\thanks{Department of Mathematics and Statistics, 
		Deen Dayal Upadhyaya Gorakhpur University, Gorakhpur, India. 
		Email: \texttt{tnverma07@gmail.com}}}
\author[2]{Manohar Choudhary\thanks{Department of Mathematics and Statistics, 
		Dr. Hari Singh Gour Vishwavidyalaya, Sagar, India. 
		Email: \texttt{manoharfbg@gmail.com}}}
\author[2]{Ram K. Pandey\thanks{Department of Mathematics and Statistics, 
		Dr. Hari Singh Gour Vishwavidyalaya, Sagar, India. 
		Email: \texttt{pandeywavelet@gmail.com}}}
\affil[1]{Department of Mathematics and Statistics, Deen Dayal Upadhyaya Gorakhpur University, Gorakhpur, India}
\affil[2]{Department of Mathematics and Statistics, Dr. Hari Singh Gour Vishwavidyalaya, Sagar, India}
\date{}
\theoremstyle{plain}
\newtheorem{theorem}{Theorem}[section]
\newtheorem{lemma}[theorem]{Lemma}
\newtheorem{proposition}[theorem]{Proposition}
\theoremstyle{definition}
\newtheorem{definition}[theorem]{Definition}
\newtheorem{remark}[theorem]{Remark}
\newtheorem{example}[theorem]{Example}
\newcommand{\bd}{\operatorname{bd}}
\newcommand{\interior}{\operatorname{int}}
\begin{document}
	
	\maketitle

	\begin{abstract}
		Many years ago John Tyrell a lecturer at King's college London challenged his Ph.D. students with the  following puzzle: show that there is a unique triangle of minimal perimeter with exactly one vertex to lie on one of three  given lines, pairwise disjoint and not all parallel in the space. The problem in literature is known as the waist problem, and only convexity rescued in this case. Motivated by this we generalize it by  replacing lines with a number of  convex sets in the Euclidean space and ask to minimize  the sum of distances connecting the sets by means of closed polygonal curve. This generalized problem significantly broadens its geometric and practical scope in view of modern convex analysis. We establish the existence of solutions and prove its uniqueness under the condition that at least one of the convex sets is strictly convex and all are in general position: each set can be separated by convex hull of others.  A complete set of necessary and sufficient optimality conditions is derived, and their geometric interpretations are explored  to link these conditions with classical principles such as the reflection law of light. To address this problem computationally, we develop a projected subgradient descent method and prove its convergence. Our algorithm is supported by detailed numerical experiments, particularly in cases involving discs and spheres. Additionally, we present a real-world analogy of the problem in the form of inter-island connectivity, illustrating its practical relevance. This work not only advances the theory of geometric optimization but also contributes effective methods and insights applicable to facility location, network design, robotics., computational geometry, and spatial planning.  
	\end{abstract}
	
\textbf{Keywords:} Waist Problem, Geometric Optimization, Convexity, Distance Minimization \\
\textbf{MSC Classification:} 52A20, 51M04

\section{Introduction}
For centuries, mathematicians have been fascinated by the beauty of geometry, inspired by timeless questions of minimizing or maximizing area, perimeter, and volume. One of the earliest problems was solved by Heron of Alexandria, who found the shortest path joining a point on a straight line to two given points in the same plane (the non-trivial case is when both points are on the same side of the plane). This problem later became known as the Heron problem. In the 17th century, this fascination continued with Heron's first generalizations as the Fermat-Torricelli Point problem \cite{AnMuSt07}, which seeks a point in the plane that minimizes the sum of distances from three given points in the same plane.\\
\indent Pushing the limits of mathematical insight, Endre Weiszfeld\cite{We37, WePl09} asks for finding a point that minimizes distances to multiple fixed points. Later, Fagnano\cite{AnMuSt07} introduced a geometric problem, famously known as the Fagnano problem, adding new depth to the study of optimization. Inspired by Fagnano's classical problem, recently in \cite{NaCh25}, we have generalized it for the first time to non-convex quadrilaterals. A similar motivation leads us to consider the waist problem, classically defined as identifying the triangle of minimal perimeter whose vertices each lie on one of three given lines in the space, as described in~\ref{Some_Optimization_Problems}.  In modern times, these classic problems and its generalizations have applicability in many practical situations, such as location science (facility planning), network design, and robotic navigation, showing the need to reinvestigate these generalizations. Recognizing the limitations of linear structures in practical purposes, we generalize the waist problem by replacing lines with convex sets. For illustration, a concrete example of the generalized waist problem is elaborated in \ref{sec:island_analogy}. Proceeding with the existence and uniqueness of a solution, we develop an optimal condition and demonstrate its numerical implementations using various examples.\\
\indent We structure this paper as follows: \ref{Some_Optimization_Problems} revisits the foundational geometric optimization problems, such as the classical Heron problem, Fermat-Torricelli Point problem, Fagnano problem, and waist problem. In~\ref{Formulation}, we present our main focus—the formal generalization of the waist problem involving multiple convex sets.
In \ref{sec:island_analogy}, we show a real-world analogy: inter-island connectivity to our generalized problem, exhibiting that this generalization is more fruitful. In~\ref{Optimality_Conditions}, we demonstrate that even without assuming the boundedness of the convex sets, the \textit{ generalized waist problem} admits a solution, and the solution is unique if at least one set is strictly convex under general positioning of the sets. We also rigorously develop a necessary and sufficient optimality conditions for this problem and its geometrical interpretation. The geometrical interpretation sheds light on how these problems are connected with the fundamental laws of reflection of light. 
In~ \ref{Convergence}, we provide a computational method, namely the projected subgradient descent algorithm. We have shown that our method is convergent. \ref{Numericalexample} presents illustrative numerical examples that highlight the effectiveness and efficiency of our proposed method, notably showing massive improvement in convergence compared to existing approaches. In particular, it is illustrated for two similar cases, namely for  three discs and three spheres. Finally, \ref{conclusion} reflects the strengths and limitations of our approach and outlines future research directions.

\subsection{Some Classical Optimization Problems} \label{Some_Optimization_Problems}
In this section we begin with classical Heron's problem and proceed with its significant generalizations. we observe that how these generalizations occur naturally and found to be applicable in diverse areas. We recall these classical problems to understand the motivation behind our generalization.
\begin{enumerate}
	\item \textbf{Heron's Problem\cite[Problem 1.1.1]{AnMuSt07}:} One of the oldest optimization problems is given by Heron of Alexandria in the first century AD, known as Heron's problem, it asks: given two points on the same side of a straight line in the same plane, find the point on the line that minimizes the sum of distances to the given two points. Some significant generalizations of Heron's problem are discussed in \cite{MoNaSa12, MoNaSa11, ChLa14}.
	\begin{figure}[H] 
		\centering
		\includegraphics[scale=.15]{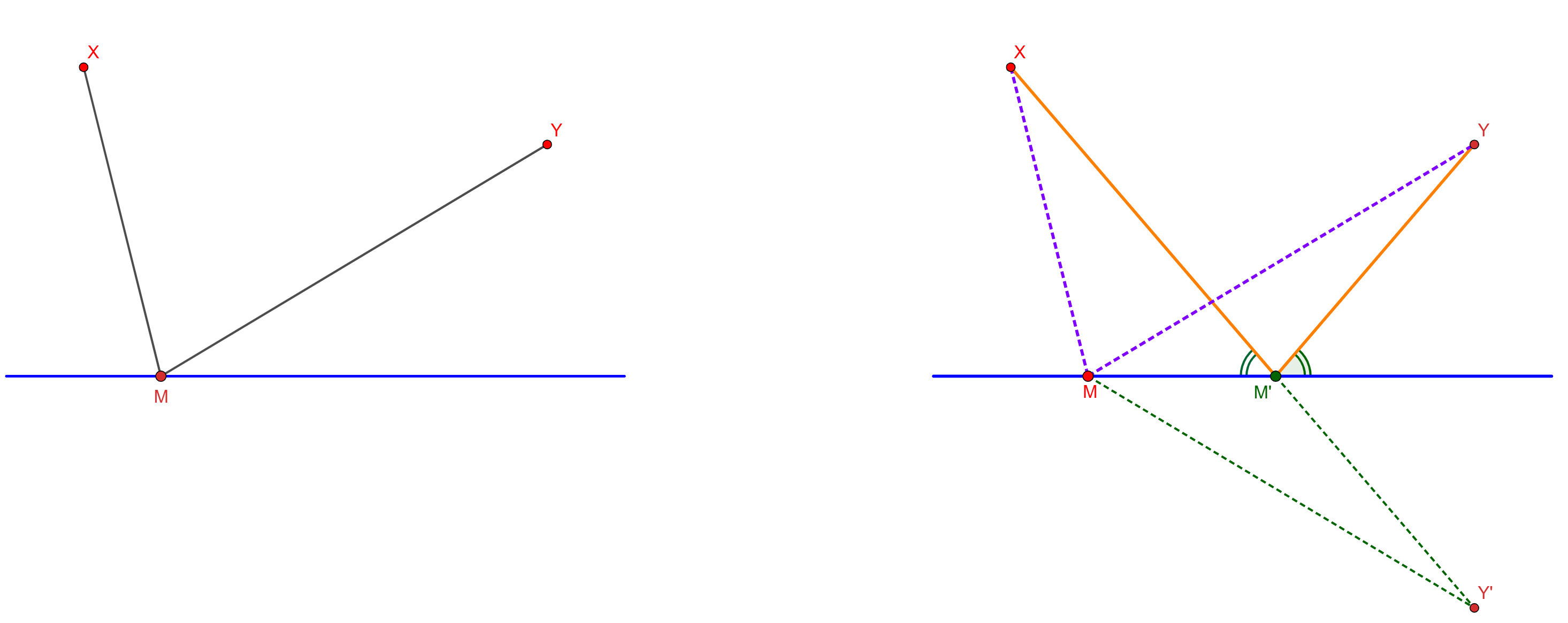} 
		\caption{Heron's problem and its Solution}
		\label{fig:HeronProblem}
	\end{figure}
	\item \textbf{Fermat-Torricelli Problem \cite[Problem 2.4.4]{BrTi11}:}
	The Fermat–Torricelli problem was initially posed by Pierre de Fermat in the 17th century as a first generalization of the Heron problem, replacing the line with the whole plane and taking three points instead of two points. Later, it is solved by Evangelista Torricelli, finding a point in the plane that minimizes the sum of its distances to three given points. Geometrically, if all interior angles of the triangle formed by these points are less than $120^{\circ}$, the optimal point known as the Fermat point lies inside the triangle and forms $120^{\circ}$ angles with each pair of connecting lines to the triangle's vertices. However, if any angle of the triangle is $120^{\circ}$ or greater, the vertex at that angle itself becomes the solution to the problem. For more than three points, Endre Weiszfeld \cite{We37, WePl09} introduced an iterative procedure, now known as the Weiszfeld algorithm, to approximate the point, minimizing the total distance. Since then, numerous generalizations and extensions of the Fermat–Torricelli problem have been explored; see \cite{Br95, MoNaSa11, NaHoAn14} for further details.
	\begin{figure}[H] 
		\centering
		\includegraphics[scale=.15]{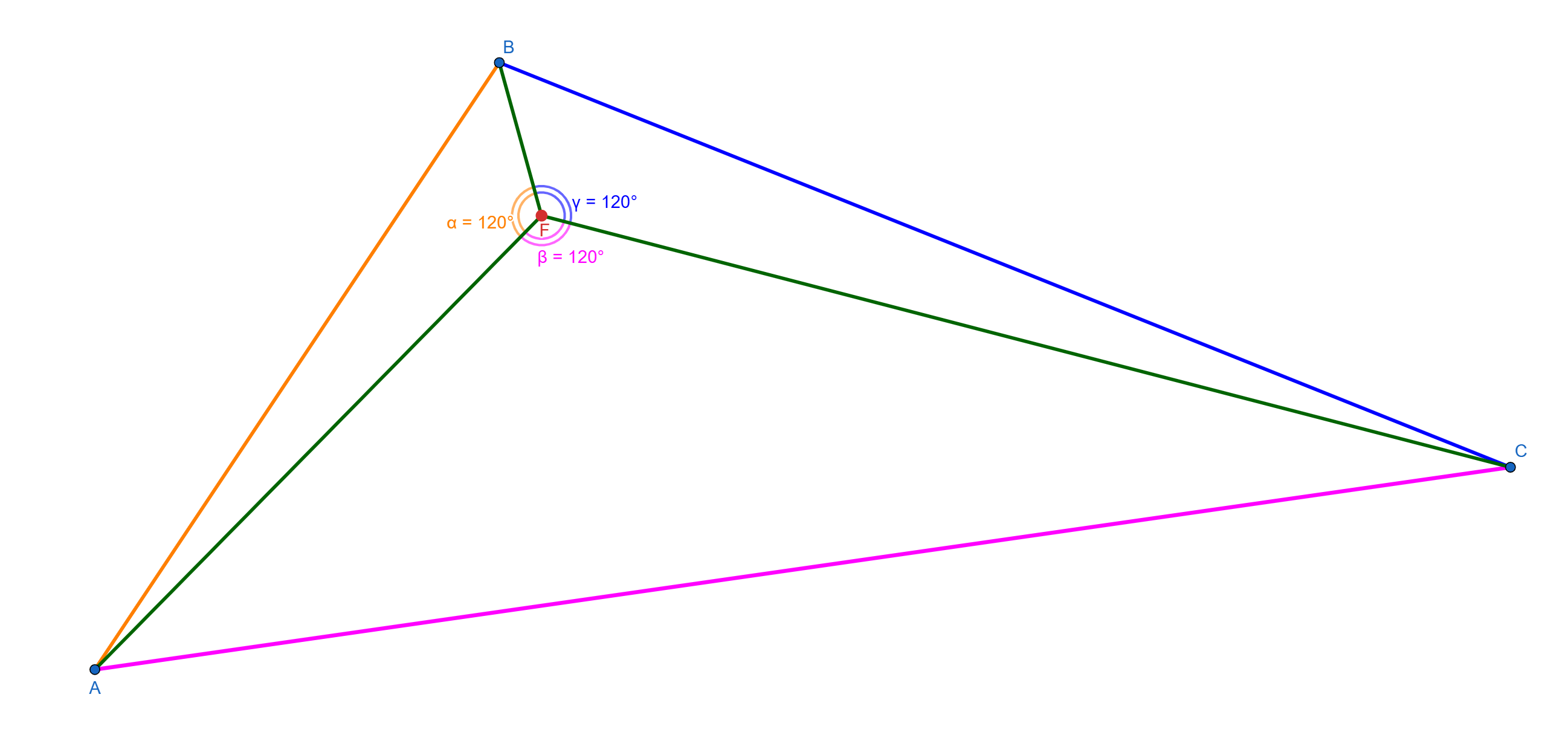} 
		\caption{Fermat-Torricelli problem and its Solution}
		\label{fig:FermatProblem}
	\end{figure}
	
	The following two classical geometrical problems are main ingredients to our generalization. Surprisingly, these two problems remained largly unnoticed untill now through the lens of modern convex analysis. 
	\vspace{0.25cm}
	\item \textbf{Fagnano's Problem\cite[Problem 1.1.3]{AnMuSt07}:} Building on the above foundational problems (Heron and Fermat-Torricelli), the Fagnano problem, proposed by Giovanni Fagnano in 1775, is to minimize the perimeter of a triangle inscribed within a given acute-angled triangle. Fagnano's solution shows that the minimal perimeter is achieved when the inscribed triangle is the orthic triangle, formed by connecting the feet of the altitudes of the original triangle. This result not only illustrates the relationship between geometric constructs but also highlights the importance of reflective properties and symmetries in optimization.
	\begin{figure}[H] 
		\centering
		\includegraphics[scale=.15]{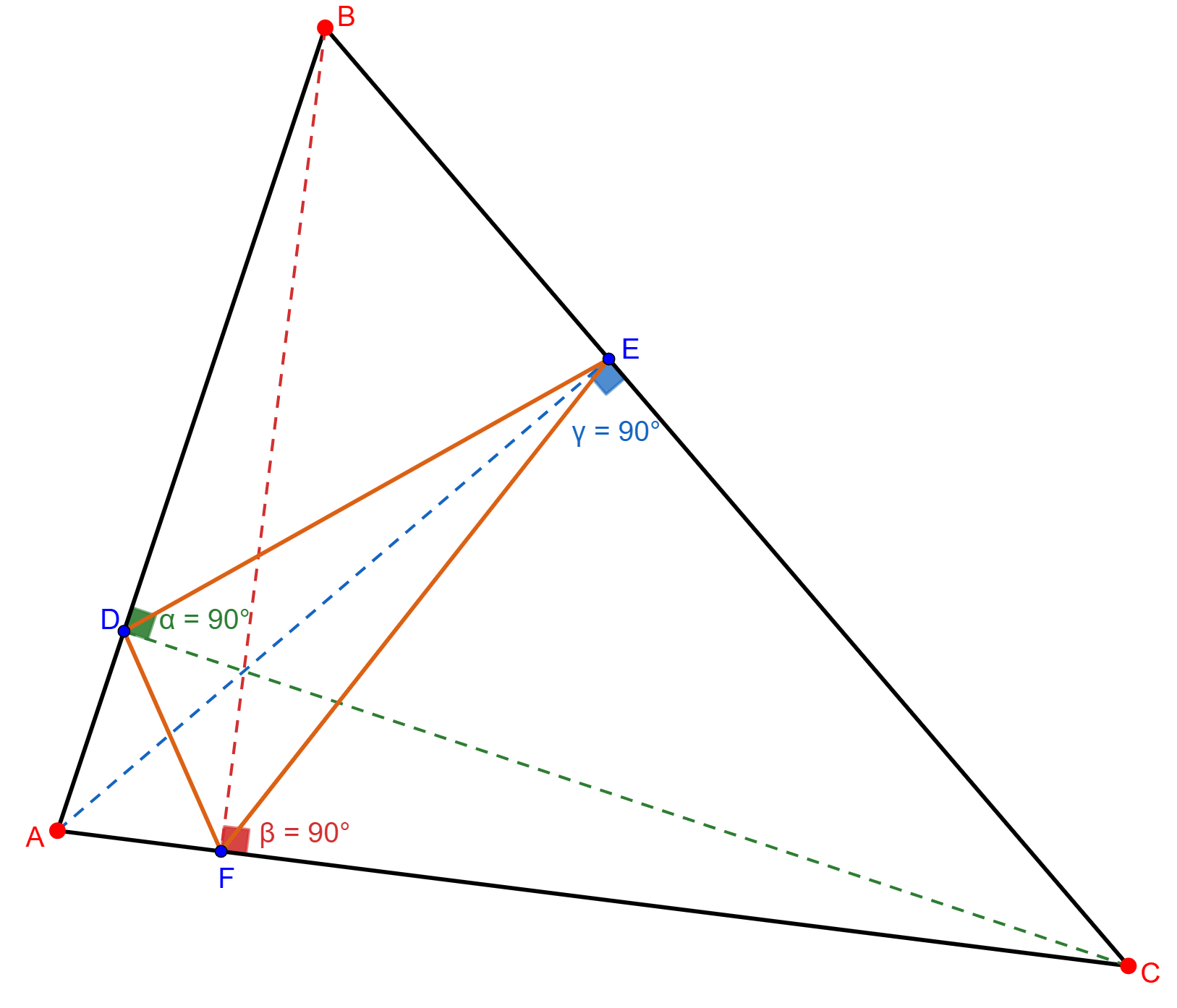} 
		\caption{Fagnano problem and its solution(Orthic Triangle)}
		\label{fig:FagnanoProblem}
	\end{figure}
	
	\item \textbf{Waist Problem\cite[Problem 4.3.4]{BrTi11}:} The waist problem, initially proposed by John Tyrrell, poses a fascinating geometric and optimization challenge: find the minimal \textit{``waist''} formed by an elastic band stretched around three fixed lines in three-dimensional space. These lines are assumed to be pairwise disjoint and not all parallel.

	\indent Geometrically, imagine wrapping of an elastic band around three rigid wires in the space. Due to its elasticity, the band naturally contracts to a position where the total length of the triangle it forms, connecting one point on each line, is minimized. This final configuration is referred to as the \textbf{waist}. Formally, this waist problem is modeled as the following optimization problem:
	
	\begin{equation}
		\min f(a_1, a_2, a_3) = \lvert a_1 - a_2 \rvert + \lvert a_2 - a_3 \rvert + \lvert a_3 - a_1 \rvert
	\end{equation}
	
	subject to \( a_i \in l_i \), $i=1,2,3$ where \( l_1, l_2, l_3 \) are the given lines in the space.\\
	\indent Initially, standard approaches, such as geometric reasoning and applying Fermat’s theorem to find stationary points, failed to establish the uniqueness of the solution. However, the breakthrough came through the lens of convexity. The objective function \( f \) is shown to be strictly convex. Consequently, it possesses a unique global minimum \cite{BrTi11}. This elegant insight bypasses the need for heavy computations and secures the uniqueness of the solution.\\
	\indent From a geometric perspective, reaching the minimal total length, the elastic band forms a triangle where each perpendicular to the line at each vertex is an angle bisector. In other words, each pair of two sides of the triangle is equally inclined with the line having the common vertex—just as in Heron’s shortest path problem, where the path of minimal distance reflects with equal angles. This equal-angle condition ensures the points reach a natural geometric equilibrium. We observe that the solution criterion to Heron's shortest path problem is the key to solve the waist problem.
	
	\begin{figure}[H]
		\centering
		\includegraphics[scale=0.15]{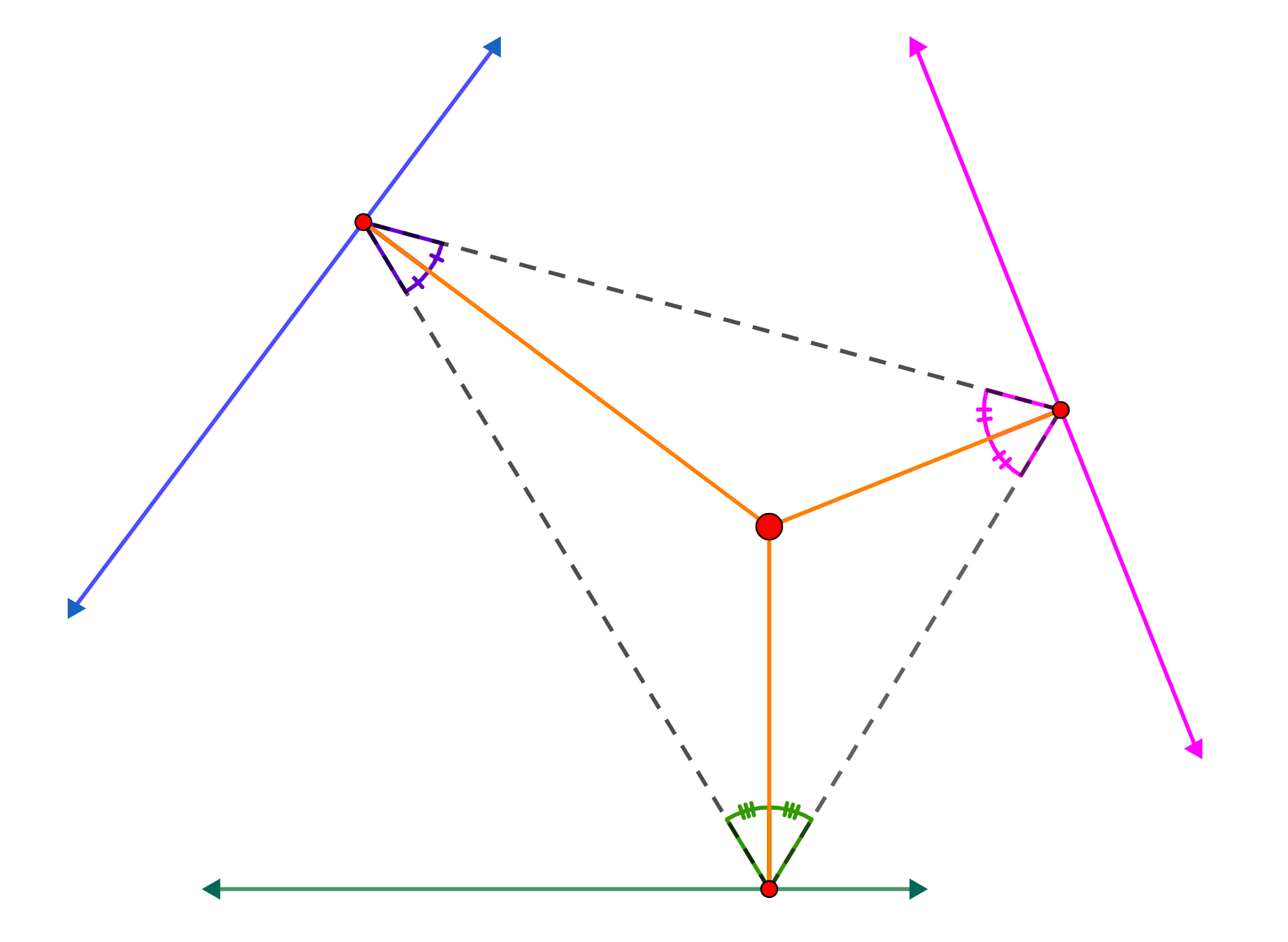}
		\caption{Illustration of the unique waist formed by three lines in space}
	\end{figure}
\end{enumerate}	

Relying upon this idea, let us increase just one more line in the classical waist problem, formally suppose there are four lines in the space  $\mathbb{R}^3$, and all are pairwise disjoint, and all are not parallel. Again, one may ask if this generalized waist problem has a unique waist.
This motivates us, instead of this problem, to consider the general case where the lines are replaced by a number of closed and convex sets. We now introduce a generalized waist problem, which is the main content of our paper, formulated in the following \ref{Formulation}.

\subsection{Generalized Waist Problem} \label{Formulation}
We consider the $n$-dimensional Euclidean space $\mathbb{R}^n,$ with standard inner product  $\langle \cdot, \cdot \rangle,$ and induced Euclidean norm $ \| \cdot \|$.\\
\indent Recall that a set $ C \subset \mathbb{R}^n $ is called \emph{convex} if, for any two points $ x, y \in C $ and any $ \lambda \in [0,1] $, the point
$\lambda x + (1 - \lambda) y \in C.$
That is, the line segment connecting any two points in $ C $ lies entirely within the set. A set $ C $ is said to be \emph{strictly convex} if, for any two distinct points $ x, y \in C $ and any $ \lambda \in (0,1) $, the point
$\lambda x + (1 - \lambda) y \in \operatorname{int}(C),$
where $ \operatorname{int}(C) $ denotes the interior of $ C $. In other words, a strictly convex set does not contain any line segments on its boundary, equivalently,  the interior of the segment between any two distinct points lies strictly inside the set.\\
\indent Throughout the paper, we let \(C_1, C_2, \dots, C_m \subset \mathbb{R}^n\) be nonempty, closed, and convex sets. Let $C := C_1 \times C_2 \times \cdots \times C_m \subset$ $\underbrace{\mathbb{R}^{n} \times \mathbb{R}^{n} \times \dots \times \mathbb{R}^{n}}_{\text{m times}}$.  For any point \(a = (a_1, a_2, \dots, a_m) \in C\), with each \(a_i \in C_i\), we define the \emph{cyclic sum of Euclidean distances} as  
\begin{equation}\label{Distance_Function}
	D(a) := \sum_{i=1}^{m} \|a_i - a_{i+1}\|, \quad \text{where}~ ~ a_{m+1} = a_1.
\end{equation}
Then our problem is 
\begin{equation}
	\min_{a \in C} D(a)
	\label{objective}
\end{equation}
\begin{figure}[H] 
	\centering	
	\includegraphics[scale=.12]{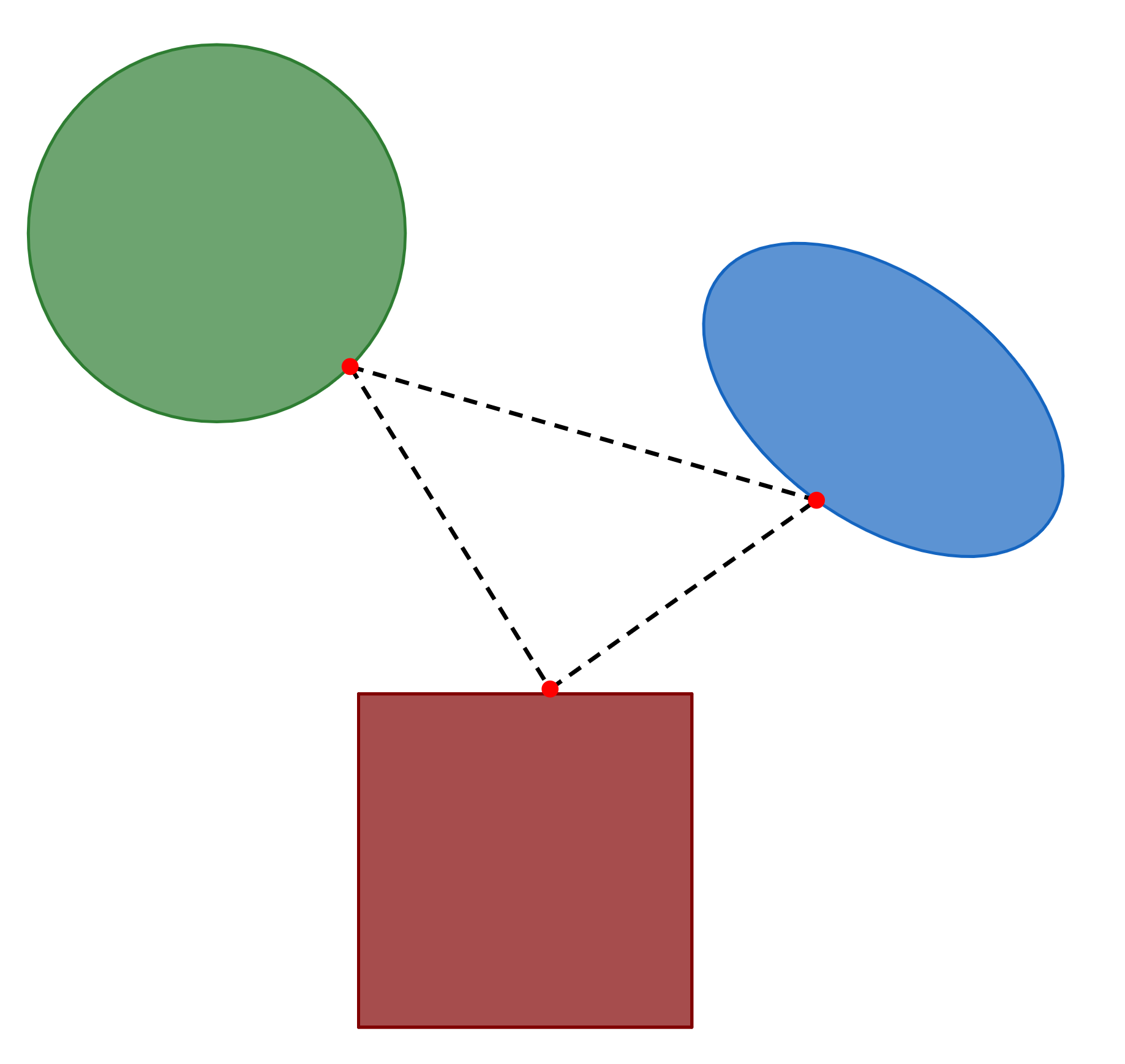} 
	\caption{Illustration of the generalized waist problem for three convex sets.}
\end{figure}

Geometrically, the problem is to determine a cyclic $m$-tuple of points $(a_1,\allowbreak a_2,\allowbreak \dots,\allowbreak a_m)$
with each $a_i$ constrained to lie in $C_i$, that minimizes the total perimeter of the closed polygonal chain formed by joining the points in sequence from $a_1$ to $a_2$, then $a_2$ to $a_3$, and so on, closing the loop from $a_m$ back to $a_1$. By \emph{closed polygonal chain} (see~\ref{fig:Convex_Polytope}), we mean a closed curve in the space formed by joining line segments end to end.
The optimization problem is defined for a specific sequence (or ordering) of the convex sets, say $(C_1, C_2, \ldots, C_m)$. Permuting this sequence generally results in a distinct optimization problem, potentially producing a different optimal value and/or a different optimal solution set. 

Since $C = C_1 \times C_2 \times \dots \times C_m$ is the finite product of a convex set $C_i,$  consequently $C$ is convex. Since norm is a convex function. It follows that $D(a)$, being the sum of convex functions, is convex. Therefore, problem~\ref{objective} is a convex optimization problem. This gives us a first glimpse that the elegant techniques from convex analysis and convex optimization must be applicable and we show that these tools are crucial to solve the proposed problem in an efficient manner.

\begin{figure}[H] 
	\centering
	
	\includegraphics[scale=.42]{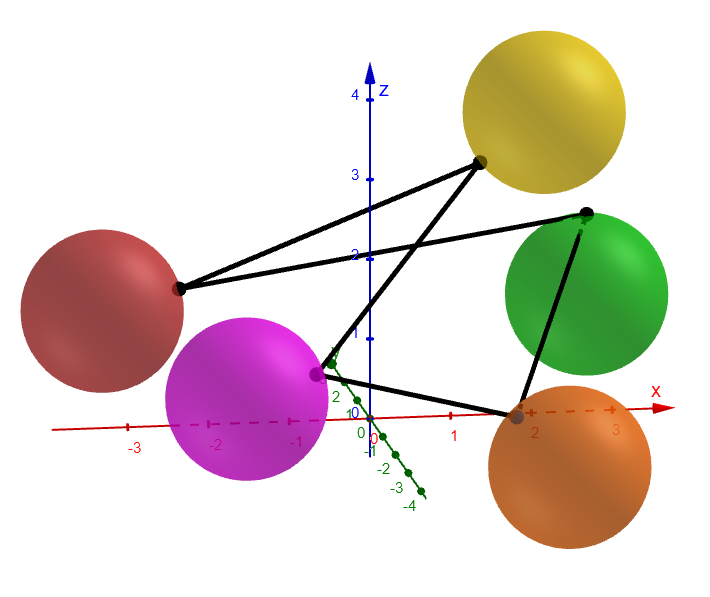} 
	\caption{Illustration of the closed polygonal chain in $\mathbb{R}^3$}
	\label{fig:Convex_Polytope}
\end{figure}
To the best of the author's knowledge, no generalization of the waist problem has been considered in the existing literature. This work presents such a generalization for the first time. Thus, our problem is new, and the following real word analogy given in \ref{sec:island_analogy} shows that it is a significant generalization of the classical waist problem, which directly resembles to distance minimization problems.
\subsection{Real-World Analogy: Inter-Island Connectivity in the Andaman Region}
\label{sec:island_analogy}
To motivate our generalized waist problem, we present a real-world geographic analogy inspired by the Andaman and Nicobar Islands, a remote Indian archipelago in the Bay of Bengal. Consider four significant landmasses in this region: \textit{Little Andaman}, \textit{North Sentinel Island}, \textit{South Andaman Island}, and \textit{Rutland Island}, each represented as a closed, convex subset of $\mathbb{R}^2$. Within each island, a specific \textit{convex subregion} is identified, illustrated by red boundaries in Figure~\ref{fig:IslandIllustration}, corresponding to feasible zones for infrastructure placement (e.g., docking points, communication hubs, or relief centers).

We aim to establish a \textit{closed-loop connection} (shown as black dashed lines) joining one point in each convex region to form a \textit{closed polygonal chain} of minimal total length. This problem is structurally identical to the one defined in equation~\eqref{objective}, where each convex set $C_i$ corresponds to a designated region on an island, and each selected point $a_i \in C_i$ represents a chosen node for inter-island connection.

The \textit{cyclic minimization} of inter-island Euclidean distances models efficient planning of physical or virtual infrastructure networks, such as:
\begin{itemize}
	\item Optimal placement of maritime transport docks or bridges,
	\item Minimization of underwater communication cable lengths,
	\item Emergency service routing between constrained zones on remote islands.
\end{itemize}

\begin{figure}[H]
	\centering
	\includegraphics[width=0.42\textwidth]{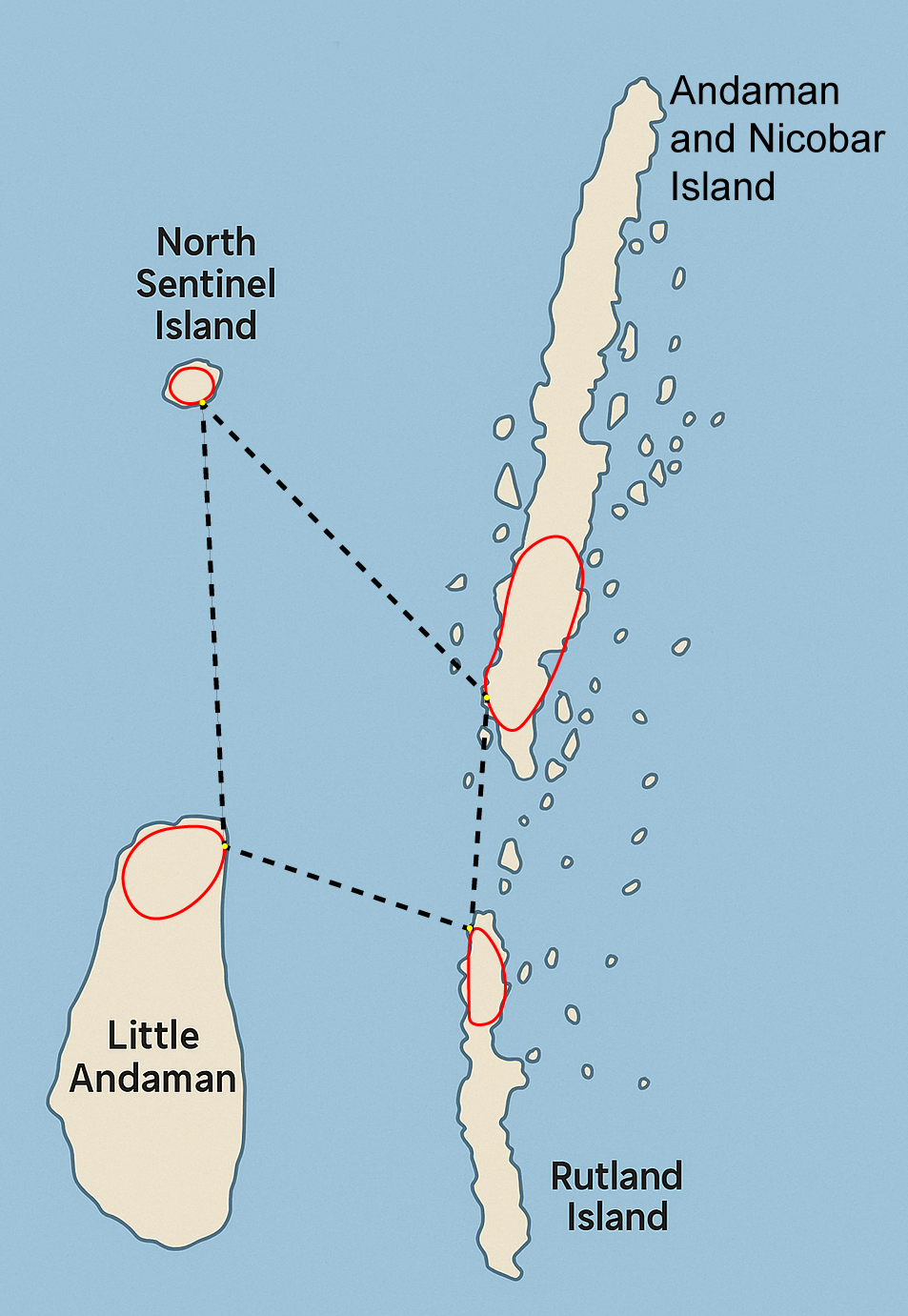}
	\caption{Real-world analogy of the waist problem using four islands in the Andaman region.}
	\label{fig:IslandIllustration}
\end{figure}

This visualization underscores the practical relevance of the proposed optimization problem in scenarios where spatial constraints, convexity, and network efficiency naturally arise. The generalized waist formulation thus offers a principled framework for addressing such constrained multi-location optimization tasks.

\section{Preliminaries}
In this section, we recall some basic definitions and results of convex analysis for a detailed study of the proposed problem in this paper. We refer to \cite{BeNeOz03, HiLe96, Be14, Ro70, DhDu11} for a comprehensive study of these materials and additional backgrounds.

The notion of convex function is a foundational concept in our analysis. The geometry of convex sets and convex functions will allow us to employ powerful tools such as \textit{normal cones}, \textit{subdifferentials}, and \textit{projection mappings.}

Before defining convex functions that may take on infinite values (such as indicator functions), it is important to identify the region where these functions are finite. This domain determines the relevant domain over which optimization and analysis can proceed.

\begin{definition}The \textit{effective domain} of a function $f : \mathbb{R}^n \to (-\infty, +\infty]$ is the set
	$$
	\operatorname{dom} f = \{ x \in \mathbb{R}^n \mid f(x) < +\infty \}.
	$$
	It consists of all points where the function $f$ attains finite values.
\end{definition}

\begin{definition} The \textit{epigraph} of a function $f : \mathbb{R}^n \to \mathbb{R}$ is the set
	$$
	\operatorname{epi} f = \{ (x, t) \in \mathbb{R}^n \times \mathbb{R} \mid f(x) \leq t \}.
	$$
	This set represents all points lying on or above the graph of $f$ in $\mathbb{R}^{n+1}$. 
\end{definition}

\begin{definition} A function $f : \mathbb{R}^n \to (-\infty, +\infty]$ is called convex if its effective domain $\operatorname{dom} f$ is convex, and for all $x, y \in \operatorname{dom} f$ and $\lambda \in (0,1)$,
	$$
	f(\lambda x + (1 - \lambda) y) \leq \lambda f(x) + (1 - \lambda) f(y).
	$$
	and $f$ is said to be strictly convex if above inequality is strict for all $ x \ne y \in \operatorname{dom} f$ and $\lambda \in [0,1]$
	Equivalently, $f$ is (strictly)  convex if and only if its epigraph $\operatorname{epi} f$ is a  (strictly)  convex set.
\end{definition}

To establish the uniqueness of the solution our problem \eqref{objective}, we draw upon two key ideas from convex analysis: the convex hull and the separation theorem. The convex hull provides a way to enclose all feasible points within the smallest possible convex set, allowing us to examine the structure of potential solutions. Meanwhile, the separation theorem ensures that under appropriate conditions, distinct feasible points can be separated using hyperplanes. Together with strict convexity of just one set, these concepts are essential in proving that the solution to our problem is not only feasible but also unique. This naturally leads us to recall the following definition.

\begin{definition} \label{def:convex_hull}	Given a set \( S \subseteq \mathbb{R}^n \), its \emph{convex hull}, denoted \(\mathrm{conv}(S)\), is the smallest convex set containing \(S\). Equivalently, \cite[Definition 6.11 ]{Be14}
	\[
	\mathrm{conv}(S)
	\;=\;
	\Bigl\{
	\sum_{i=1}^{m} \lambda_i x_i
	\;\Big\vert\;
	x_i \in S, \;
	\lambda_i \ge 0, \;
	\sum_{i=1}^{m} \lambda_i = 1, \;
	m \in \mathbb{N}
	\Bigr\}.
	\]
	In other words, any point in \(\mathrm{conv}(S)\) can be written as a convex combination of points in \(S\).
\end{definition}

The Separation of two convex sets by means of a hyperplane plays an important role in optimization and analysis. In particular, the alternative theorem arising in study of optimality conditions are consequences of separation theorem \cite[p.124]{HiLe96}. Here are these important notions.    

\begin{definition} Let \( C_1 \) and \( C_2 \) be two nonempty, disjoint convex sets in \( \mathbb{R}^n \). We say that \( C_1 \) and \( C_2 \) are \textbf{separable} if there exists a nonzero vector \( a \in \mathbb{R}^n \) and a scalar \( \alpha \in \mathbb{R} \) such that
	\[
	\langle a, x \rangle \leq \alpha \quad \forall x \in C_1 \quad \text{and} \quad \langle a, y \rangle \geq \alpha \quad \forall y \in C_2.
	\]
	This means a hyperplane \( H = \{ z \in \mathbb{R}^n : \langle a, z \rangle = \alpha \} \) separates the two sets, with one on each side or possibly touching the hyperplane.
	
	\vspace{0.3cm}
	
	If, in addition, the inequality is strict, i.e., there exists \( a \in \mathbb{R}^n \setminus \{0\} \) and \( \alpha \in \mathbb{R} \) such that
	\[
	\langle a, x \rangle < \alpha \quad \forall x \in C_1 \quad \text{and} \quad \langle a, y \rangle > \alpha \quad \forall y \in C_2,
	\]
	then \( C_1 \) and \( C_2 \) are said to be \textbf{strictly separable}.
	
\end{definition}

\begin{remark}
	Even when two convex sets are closed and disjoint, strict separation is not guaranteed unless additional conditions (e.g., compactness of at least one set) are satisfied.
\end{remark} 

Let
$A = \{(x,y) \in \mathbb{R}^2 \mid y \ge 0\},$ and $B = \{(x,y) \in \mathbb{R}^2 \mid xy \le -1,\; x > 0\}.$
The sets $A$ and $B$ are disjoint, closed, and convex; however, no hyperplane can strictly separate them (see~\ref{fig:separation}).

\begin{figure}[H]
	\centering
	\includegraphics[width=0.4\textwidth]{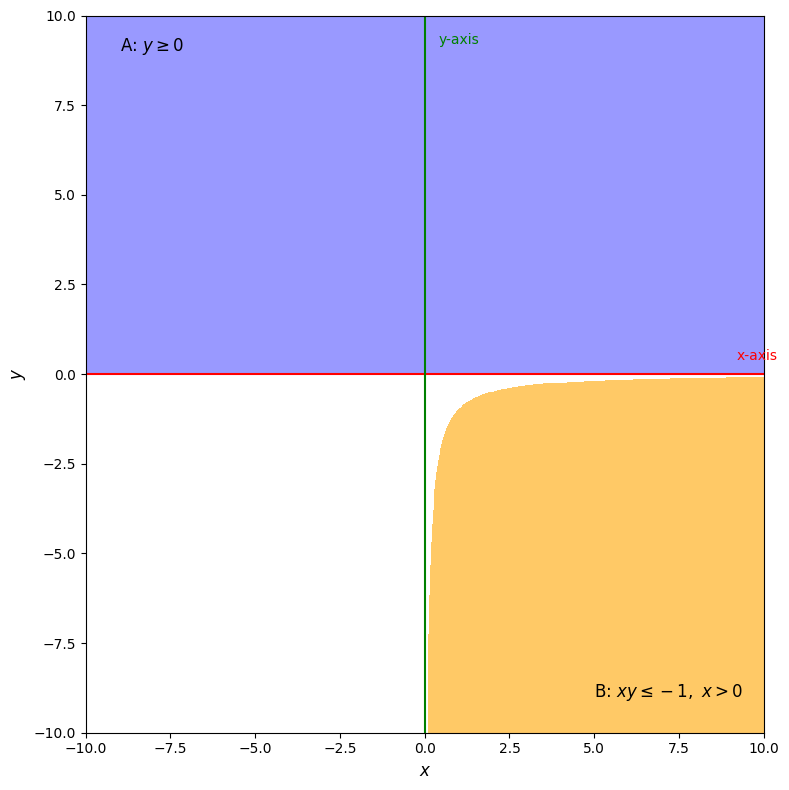}
	\caption{Convex sets \(A\) and \(B\) are closed and disjoint, but cannot be strictly separated by a hyperplane.}
	\label{fig:separation}
\end{figure}
However, the following theorem guarantees strict separation when one of the sets is compact.  	
\begin{theorem}[\textit{Strict Separation Theorem}]
	Let \( C_1 \) and \( C_2 \) be two nonempty, disjoint, closed convex subsets of \( \mathbb{R}^n \), and suppose that at least one of them is compact. Then there exist a nonzero vector \( a \in \mathbb{R}^n \) and a scalar \( \alpha \in \mathbb{R} \) such that
	\[
	\langle a, x \rangle < \alpha \quad \forall x \in C_1, \qquad \langle a, y \rangle > \alpha \quad \forall y \in C_2.
	\]
\end{theorem}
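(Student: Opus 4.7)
My plan is to reduce strict separation of $C_1$ and $C_2$ to separating a single point from a closed convex set. Form the Minkowski difference
$$C := C_1 - C_2 = \{x - y : x \in C_1,\; y \in C_2\}.$$
Because $C_1$ and $C_2$ are convex, so is $C$, and disjointness of $C_1$ and $C_2$ immediately yields $0 \notin C$. If I can additionally show that $C$ is \emph{closed}, then the classical projection theorem onto a closed convex set gives a unique closest point $z^\star \in C$ to the origin with $z^\star \neq 0$, and the variational characterization of that projection,
$$\langle z - z^\star,\; 0 - z^\star \rangle \le 0 \quad \text{for all } z \in C,$$
rearranges to $\langle z^\star, z \rangle \ge \|z^\star\|^2 > 0$ for every $z \in C$. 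Unpacking $z = x - y$ with $x \in C_1$, $y \in C_2$, this reads $\langle z^\star, x \rangle - \langle z^\star, y \rangle \ge \|z^\star\|^2$, so defining $a := -z^\star$ and $\alpha := \tfrac{1}{2}\bigl(\sup_{x \in C_1}\langle a,x\rangle + \inf_{y \in C_2}\langle a,y\rangle\bigr)$ produces the required strictly separating hyperplane with slack at least $\tfrac{1}{2}\|z^\star\|^2$ on each side.

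The only nontrivial step, and the one that genuinely uses the compactness hypothesis, is the closedness of $C = C_1 - C_2$. Without loss of generality assume $C_2$ is compact. Take a sequence $z_k = x_k - y_k \in C$ with $z_k \to z \in \mathbb{R}^n$. By compactness of $C_2$, pass to a subsequence so that $y_{k_j} \to y^\star \in C_2$. Then $x_{k_j} = z_{k_j} + y_{k_j} \to z + y^\star$, and since $C_1$ is closed this limit lies in $C_1$, whence $z = (z + y^\star) - y^\star \in C_1 - C_2 = C$. The remark preceding the theorem (with the pair $A, B$) shows that this is exactly where compactness is indispensable: for two merely closed convex sets whose difference fails to be closed, $0$ could be a limit point of $C$ and the projection argument would collapse.

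The main obstacle is thus isolating precisely where compactness is invoked and resisting the temptation to use it earlier (for instance, to argue that $\inf_{x \in C_1, y \in C_2} \|x - y\|$ is attained, which is an essentially equivalent route). With the closedness of $C_1 - C_2$ in hand, everything else reduces to the projection lemma for closed convex sets, which is a standard consequence of the parallelogram identity together with the completeness of $\mathbb{R}^n$, and requires no further convex-analytic machinery beyond what has already been recalled in this section.
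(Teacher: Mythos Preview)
The paper states this theorem in the Preliminaries without proof, treating it as a standard result from convex analysis, so there is no argument of the paper's to compare against. Your proof is correct and follows one of the canonical routes: closedness of $C_1 - C_2$ via compactness of one summand, projection of the origin onto the resulting nonempty closed convex set, and the variational inequality for that projection yielding the uniform gap $\|z^\star\|^2$. The one detail worth making explicit is that $\sup_{x\in C_1}\langle a,x\rangle$ and $\inf_{y\in C_2}\langle a,y\rangle$ are both finite, so that $\alpha$ is well defined; this follows at once from the gap inequality $\langle a,y\rangle \ge \langle a,x\rangle + \|z^\star\|^2$ together with nonemptiness of $C_1$ and $C_2$.
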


A fundamental question for any problem is whether a solution actually exists. For minimization problems over potentially unbounded domains for e.g.  whole \( \mathbb{R}^n \), the objective function might decrease indefinitely. The concept of coercivity is vital to prevent this. A coercive function grows infinitely large as its argument moves infinitely far from the origin, ensuring that the minimum must occur within some bounded region.

\begin{definition}\cite[Definition 2.31]{Be14}
	A function \( f: \mathbb{R}^n \to \mathbb{R} \) is said to be \textit{coercive} if 
	\[
	\| x \| \to \infty \implies f(x) \to \infty,
	\]
	
	In other words, as the magnitude of \( x \) increases indefinitely, the value of \( f(x) \) also increases without bound.
\end{definition}

\begin{theorem}\label{coercivity_theorem}
	\cite[Corollary 1.7]{BrTi11}
	If a function \( f: \mathbb{R}^n \to \mathbb{R} \) is continuous and coercive, then the minimization problem 
	\[
	\min_{x \in \mathbb{R}^n} f(x)
	\]
	has a global minimum.
	
\end{theorem}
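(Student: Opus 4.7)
The plan is to reduce the optimization on the unbounded domain $\mathbb{R}^n$ to one on a compact set, where the classical Weierstrass extreme value theorem applies. First I would fix any reference point $x_0 \in \mathbb{R}^n$ and set $\alpha := f(x_0)$; this gives $\inf_{x \in \mathbb{R}^n} f(x) \le \alpha < +\infty$, so the problem is non-trivial and the infimum lies in $[-\infty, \alpha]$.

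Next I would exploit coercivity to confine the search to a bounded region. By the definition of coercivity, $f(x) \to \infty$ as $\|x\| \to \infty$, so one can choose $R > 0$ (with $R \ge \|x_0\|$ without loss of generality) large enough so that $f(x) > \alpha$ whenever $\|x\| > R$. This immediately implies that the sublevel set $\{x \in \mathbb{R}^n : f(x) \le \alpha\}$ is contained in the closed ball $\overline{B}(0, R)$, and hence every candidate global minimizer of $f$ must also lie in $\overline{B}(0, R)$.

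The final step is a direct Weierstrass argument: $\overline{B}(0, R)$ is closed and bounded in $\mathbb{R}^n$, hence compact, and $f$ is continuous by hypothesis, so $f$ attains its minimum on $\overline{B}(0, R)$ at some $x^* \in \overline{B}(0, R)$. To verify that $x^*$ is in fact a global minimizer over all of $\mathbb{R}^n$, I would split cases: if $\|x\| \le R$, then $f(x) \ge f(x^*)$ by the choice of $x^*$; and if $\|x\| > R$, then $f(x) > \alpha = f(x_0) \ge f(x^*)$, since $x_0 \in \overline{B}(0, R)$ and so $f(x^*) \le f(x_0)$. Combining the two cases establishes that $x^*$ is a global minimum on $\mathbb{R}^n$.

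There is no substantive obstacle here; the only point that requires care is converting the limit statement ``$f(x) \to \infty$ as $\|x\| \to \infty$'' into the quantitative bound ``there exists $R > 0$ such that $\|x\| > R \Rightarrow f(x) > \alpha$'', which is a routine unwinding of the definition of the limit at infinity. Everything else is a standard compactness-plus-continuity argument.
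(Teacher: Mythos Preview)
Your proof is correct and is the standard Weierstrass-via-coercivity argument. The paper itself does not supply a proof of this theorem; it is stated in the preliminaries with a citation to \cite[Corollary~1.7]{BrTi11}, so there is nothing to compare against.
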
	

A key aspect of our research involves evaluating how near a point lies to other points or to its respective constraint set \( C_i \). The \emph{distance function} provides a precise way to express the minimal distance between a point and a set.

\begin{definition} Let \( C \subset \mathbb{R}^n \) be a nonempty set. The \textit{distance function} \( d_C : \mathbb{R}^n \to [0, +\infty) \) of the set \( C \) is defined as:
	\[
	d_C(a) = \inf_{b \in C} \| a - b \|.
	\] 
	
	It may be noted that $	d_C(\cdot)$	is a convex function if and only if $C$ is a convex set.
	
\end{definition}	

Achieving this minimum distance involves finding a specific point within the set $C$. The \emph{Euclidean projection} identifies this closest point (or points). For the closed convex sets $C_i$, this projection is unique. The projection operator is not just theoretical it often appears directly in algorithms (e.g., projected subgradient descent methods) used to solve constrained optimization problems, ensuring iterates remain feasible.

\begin{definition}	Let $C \subseteq \mathbb{R}^n$ be a nonempty set.
	For each $a \in \mathbb{R}^n$, the \emph{Euclidean projection} of $a$ onto $C$ is defined by
	\[
	\operatorname{proj}_C(a)
	\;:=\;
	\{\,b \in C \;\mid\; \|a - b\| \;=\; d_C(a)\},
	\quad\text{where}\quad
	d_C(a)
	\;:=\;
	\inf_{b \in C}
	\|a - b\|.
	\]
	Furthermore, if $C$ is nonempty, closed, and convex, then for each $a \in \mathbb{R}^n$ $\operatorname{proj}_C(a)$ is unique. Thus the set $\operatorname{proj}_C(a)$ is a singleton, and 
	\[
	d_C(a) = \| a - \operatorname{proj}_C(a) \|,
	\]
	
\end{definition}

In our problem~\ref{objective}, it is essential that each point $ a_i $ remains within its respective convex set $ C_i $.

A powerful technique from convex analysis to handle such constraints is the \emph{indicator function}. This non-negative extended real value function elegantly transforms the constrained problem into an equivalent unconstrained one by assigning an infinite penalty to points outside the feasible set.

\begin{definition} 	For a set $K \subset \mathbb{R}^n$, the \textit{indicator function} $\delta_K : \mathbb{R}^n \to \{0, +\infty\}$ is defined as
	$$
	\delta_K(a) = \begin{cases}
		0, & \text{if } a \in K, \\
		+\infty, & \text{if } a \notin K.
	\end{cases}
	$$
	
\end{definition}

This function effectively penalizes any point outside $K$ by assigning it an infinite cost, thus ensuring that the optimization problem considers only points within $K$. Addition of this typical extended real valued function helps us to convert constrained optimization problem~\ref{objective} into an equivalent unconstrained optimization problem.

\begin{equation} 
	\min_{(a=a_1, a_2, \dots, a_m) \in\, \mathbb{R}^{mn}} \Biggl\{
	\sum_{i=1}^{m} \|a_i - a_{i+1}\| +  \delta_{C}(a)
	\Biggr\},
	\label{unconstrainedobjective}
\end{equation}
where $\mathbb{R}^{mn}$ stands for $\underbrace{\mathbb{R}^{n} \times \mathbb{R}^{n} \times \dots \times \mathbb{R}^{n}}_{\text{m times}}$.

This reformulation helps us to apply the tools from convex analysis and nonsmooth optimization methods. Using this reformulation, it is much easier to implement line search algorithm to solve the problem computationally.

To derive optimality conditions and to develop a computational method for solving the problem, the notions of normal cone and subgradient are ubiquitous.   

A key step in this process is applying the concept of subgradients to obtain explicit optimality conditions for our objective function in~\ref{unconstrainedobjective}. This requires to compute subdifferentials—especially for distance functions, as well as for compositions and sums of functions. In particular, since our objective function involves terms for measuring distances—either from points to constraint sets \( C_i \), or between points, Note that $d_{\{a_j\}}(a_i)=\|a_i-a_j\|$ is distance of $a_i$ from the singleton set $\{a_j\}$. Therefore it is essential to compute the subdifferential of the distance function \( d_C(a) \). The resulting formula depends critically on whether the point \( a \) lies inside or outside the set \( C \).

The following notion is a nonsmooth generalization of a normal to a surface and is referred to as \textit{normal cone}, which plays central role even in case of smooth optimization.

\begin{definition}
	Let $C \subset \mathbb{R}^n$ be a convex set, and let $x \in C$. The \textit{normal cone} to $C$ at $x$, denoted by $N_C(x)$, is defined as
	$$
	N_C(x) = \{ v \in \mathbb{R}^n \mid \langle v, y - x \rangle \leq 0, \ \forall y \in C \},
	$$
	where $\langle \cdot, \cdot \rangle$ denotes the standard inner product in $\mathbb{R}^n$. Geometrically, the normal cone consists of all vectors $v$ that form an obtuse angle with any vector pointing from $x$ to another point $y$ in $C$. 
\end{definition}

The following result provides a nice tool to compute normal cone to cartessian product of convex sets, and it is necessary what we follows.

\begin{proposition}\cite[Page 59, Proposition 2.39]{DhDu11} 
	Consider two closed convex sets $C_i \subset \mathbb{R}^{n_i}$, for $i = 1, 2$. 
	Let $a_i \in C_i$, for $i = 1, 2$. Then
	\[
	N_{C_1 \times C_2}\bigl((a_1, a_2)\bigr)
	= 
	N_{C_1}(a_1) 
	\times 
	N_{C_2}(a_2).
	\]
	\label{Normalcone_Product}
\end{proposition}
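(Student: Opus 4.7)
The plan is to prove the proposition by directly unwinding the definition of the normal cone and exploiting the fact that the standard inner product on $\mathbb{R}^{n_1} \times \mathbb{R}^{n_2}$ splits as a sum of inner products on each factor. Specifically, for $(u_1, u_2), (v_1, v_2) \in \mathbb{R}^{n_1} \times \mathbb{R}^{n_2}$, one has $\langle (u_1, u_2), (v_1, v_2) \rangle = \langle u_1, v_1 \rangle + \langle u_2, v_2 \rangle$. With this observation, the proposition reduces to a short bookkeeping argument showing both set inclusions.

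For the inclusion $N_{C_1 \times C_2}((a_1, a_2)) \subseteq N_{C_1}(a_1) \times N_{C_2}(a_2)$, I would take an arbitrary $(v_1, v_2) \in N_{C_1 \times C_2}((a_1, a_2))$, so that for every $(y_1, y_2) \in C_1 \times C_2$,
\[
\langle v_1, y_1 - a_1 \rangle + \langle v_2, y_2 - a_2 \rangle \leq 0.
\]
To isolate the first component, I would hold $y_2 = a_2$ fixed (which is admissible since $a_2 \in C_2$) and let $y_1$ vary in $C_1$; this forces $\langle v_1, y_1 - a_1 \rangle \leq 0$ for every $y_1 \in C_1$, giving $v_1 \in N_{C_1}(a_1)$. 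A symmetric choice $y_1 = a_1$ yields $v_2 \in N_{C_2}(a_2)$.

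For the reverse inclusion, I would take $v_1 \in N_{C_1}(a_1)$ and $v_2 \in N_{C_2}(a_2)$, so that $\langle v_1, y_1 - a_1 \rangle \leq 0$ for all $y_1 \in C_1$ and $\langle v_2, y_2 - a_2 \rangle \leq 0$ for all $y_2 \in C_2$. Adding the two inequalities and using the splitting of the inner product on the product space gives
\[
\langle (v_1, v_2), (y_1, y_2) - (a_1, a_2) \rangle \leq 0 \quad \text{for all } (y_1, y_2) \in C_1 \times C_2,
\]
which is exactly the statement that $(v_1, v_2) \in N_{C_1 \times C_2}((a_1, a_2))$.

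There is no real obstacle here: the result is essentially a tautology once the inner product is recognized as separable across the product. The only thing to be careful about is to ensure that the test points $y_2 = a_2$ and $y_1 = a_1$ used in the forward direction are legitimate members of $C_2$ and $C_1$ respectively, which is guaranteed by the hypothesis $a_i \in C_i$. Closedness of the sets is not strictly needed for this algebraic identity, but it is consistent with the standing assumptions of the paper.
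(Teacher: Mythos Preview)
Your proof is correct and is the standard direct argument from the definition of the normal cone. Note, however, that the paper does not supply its own proof of this proposition: it is simply quoted from \cite[Page 59, Proposition 2.39]{DhDu11} without argument, so there is nothing in the paper to compare your approach against.
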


\begin{definition} Let $f : \mathbb{R}^n \to (-\infty, +\infty]$ be a convex function. A vector $v \in \mathbb{R}^n$ is called a \textit{subgradient} of $f$ at $x \in \operatorname{dom} f$ if
	$$
	f(y) \geq f(x) + \langle v, y - x \rangle, \quad \forall y \in \mathbb{R}^n.
	$$
	The set of all subgradients at $x$ is called the subdifferential of $f$ at $x$, and it is denoted $\partial f(x)$.
\end{definition}

For the indicator function \( \delta_C \) of a convex set \( C \), the subdifferential at a point \( x \in C \) is equal to the normal cone to \( C \) at \( x \) ~\cite[Example 3.7, p 85]{MoNa23}. That is,

\begin{equation}
	\partial \delta_C(x) = N_C(x).
	\label{subgradientofindicator}
\end{equation}

\begin{proposition} \label{subgradientformula}
	Let \( C \subset \mathbb{R}^n \) be a nonempty, closed, and convex set. Then the subdifferential of the distance function \( d_C \) at a point \( a \in \mathbb{R}^n \) is given by:
	\[
	\partial d_C(a) = 
	\begin{cases}
		\left\{ \dfrac{a - \operatorname{proj}_C(a)}{\| a - \operatorname{proj}_C(a) \|} \right\}, & \text{if } a \notin C, \\[10pt]
		N_C(a) \cap \mathbb{B}, & \text{if } a \in C,
	\end{cases}
	\]
	where \( \mathbb{B} \) is the closed unit ball in \( \mathbb{R}^n \). The proof of this formula can be found, for example, in \cite[Example 3.3, p.~259]{HiLe96}.
\end{proposition}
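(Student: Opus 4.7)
The plan is to verify the formula by treating the two cases $a \in C$ and $a \notin C$ separately, arranging the argument so that the case $a \in C$ is handled first and then leveraged for the exterior case.

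For $a \in C$, the defining inequality $v \in \partial d_C(a)$ reduces to $d_C(y) \geq \langle v, y - a \rangle$ because $d_C(a) = 0$, and I would prove both inclusions in turn. To establish $N_C(a) \cap \mathbb{B} \subseteq \partial d_C(a)$, I would fix $v \in N_C(a) \cap \mathbb{B}$ and exploit the decomposition
\[
\langle v, y - a \rangle = \langle v, y - \operatorname{proj}_C(y) \rangle + \langle v, \operatorname{proj}_C(y) - a \rangle,
\]
bounding the first term by $\|v\|\,\|y - \operatorname{proj}_C(y)\| \leq d_C(y)$ via Cauchy--Schwarz and noting that the second term is non-positive because $\operatorname{proj}_C(y) \in C$ and $v \in N_C(a)$. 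For the reverse inclusion, I would test the subgradient inequality against $y \in C$, which yields $\langle v, y - a \rangle \leq 0$ and hence $v \in N_C(a)$, and then use the $1$-Lipschitz property of $d_C$, itself a consequence of the triangle inequality applied to $\|\cdot - c\|$ for $c \in C$, to force $\|v\| \leq 1$ by substituting $y = a + tv$ with $t > 0$ small.

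For $a \notin C$, set $p := \operatorname{proj}_C(a)$ and $v := (a - p)/\|a - p\|$. The projection characterization $\langle a - p,\, c - p \rangle \leq 0$ for all $c \in C$ shows $v \in N_C(p) \cap \mathbb{B}$, so by the case already proved, $v \in \partial d_C(p)$. The resulting inequality $d_C(y) \geq \langle v, y - p \rangle$ together with the identity $\langle v, y - p \rangle = \langle v, y - a \rangle + \|a - p\| = \langle v, y - a \rangle + d_C(a)$ establishes $v \in \partial d_C(a)$. For uniqueness, suppose $u \in \partial d_C(a)$: the $1$-Lipschitz bound gives $\|u\| \leq 1$, while testing the subgradient inequality at $y = p$ yields $0 \geq d_C(a) + \langle u, p - a\rangle$, i.e.\ $\langle u, v \rangle \geq 1$; combined with Cauchy--Schwarz, these force $u = v$.

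The main obstacle will be ensuring that the uniqueness step for $a \notin C$ is airtight: exhibiting $v$ as \emph{some} subgradient is easy, but pinning down $\partial d_C(a) = \{v\}$ requires the delicate interplay between the $1$-Lipschitz bound $\|u\| \leq 1$ and the Cauchy--Schwarz equality case at $\langle u, v \rangle = \|u\|\,\|v\| = 1$. The key structural insight that keeps the argument short is that the direct subgradient verification for $a \notin C$ can be bypassed by invoking the $a \in C$ case at the projection point $p$, converting an exterior question into an interior one.
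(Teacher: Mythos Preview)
Your proof is correct and self-contained. The paper does not actually prove this proposition: it simply cites \cite[Example~3.3, p.~259]{HiLe96} and moves on, so there is no in-paper argument to compare against. Your two-case treatment, with the clever reduction of the exterior case to the interior case at the projection point $p$, is a standard and clean way to establish the formula; the uniqueness step via the Cauchy--Schwarz equality case is handled correctly.
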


\noindent
In particular, consider the case where the set \( C \subset \mathbb{R}^n \) is a singleton, i.e., \( C = \{b\} \) for some \( b \in \mathbb{R}^n \). Then, for any \( a \in \mathbb{R}^n \), the projection of \( a \) onto \( C \) is uniquely given by
\[
\operatorname{proj}_C(a) =\{ b\}.
\]
Thus, the distance from \( a \) to \( C \) simplifies to
\[
d_C(a) = \|a - b\|.
\]
In this case, the subdifferential of the distance function at \( a \), for \( a \neq b \), is given by
\begin{equation} \label{subgradientformula_forsingleton}
	\partial d_C(a) = \left\{ \frac{a - b}{\|a - b\|} \right\}.
\end{equation}

\begin{theorem}[Affine Composition Rule, {\cite[Theorem 4.2.1, p.263]{HiLe96}}]\label{affinetheorem}
	Let  $T: \mathbb{R}^n \to \mathbb{R}^m$ 
	be an affine mapping defined by $T(x) \;=\; T_0 \,x \;+\; c,$ where  	$T_0: \mathbb{R}^n \to \mathbb{R}^m$
	is a linear operator and  \(c \in \mathbb{R}^m\).  
	Let \(\psi: \mathbb{R}^m \to \mathbb{R}\) be a finite convex function. Then, for every \(x \in \mathbb{R}^n\),
	\[
	\partial\bigl(\psi\circ T\bigr)(x)
	\;=\;
	T_0^*\,\partial \psi\bigl(T(x)\bigr),
	\]
	where $T_0^*: \mathbb{R}^m \to \mathbb{R}^n$
	denotes the adjoint operator of \(T_0\). In other words, \(T_0^*\) is the unique linear map satisfying
	\[
	\langle T_0\,x,\; v \rangle
	\;=\;
	\langle x,\; T_0^*\,v \rangle
	\quad
	\forall\, x \in \mathbb{R}^n,\; v \in \mathbb{R}^m.
	\]
\end{theorem}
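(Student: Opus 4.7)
My plan is to establish the two set inclusions in $\partial(\psi \circ T)(x) = T_0^*\,\partial\psi(T(x))$ separately.

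The inclusion $T_0^*\,\partial\psi(T(x)) \subseteq \partial(\psi \circ T)(x)$ is a direct computation. Given $v \in \partial\psi(T(x))$, I would apply the defining subgradient inequality $\psi(z) \ge \psi(T(x)) + \langle v,\, z - T(x)\rangle$ at $z = T(y)$. Using the affine identity $T(y)-T(x) = T_0(y-x)$ and the adjoint relation $\langle v,\, T_0(y-x)\rangle = \langle T_0^* v,\, y-x\rangle$, this inequality transforms into the subgradient inequality for $\psi \circ T$ at $x$ with subgradient $T_0^* v$. No geometric obstruction appears here; the step is purely algebraic.

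For the reverse inclusion, which is the substantive part, I would run a separation argument in $\mathbb{R}^m \times \mathbb{R}$. Fix $\xi \in \partial(\psi \circ T)(x)$ and consider the affine set
\[
A \;=\; \bigl\{\bigl(T(y),\; (\psi\circ T)(x) + \langle \xi,\, y - x\rangle\bigr) : y \in \mathbb{R}^n\bigr\}
\]
and the open convex set $B = \operatorname{int}(\operatorname{epi}\psi) = \{(z,t) : \psi(z) < t\}$. The subgradient inequality $\psi(T(y)) \ge (\psi \circ T)(x) + \langle \xi,\, y - x\rangle$ forces $A \cap B = \emptyset$. Since $\psi$ is finite on all of $\mathbb{R}^m$ it is continuous, so $B$ is a nonempty open convex set, and a separating functional $(u,s) \in \mathbb{R}^m \times \mathbb{R}$ exists (geometric Hahn--Banach), with $\operatorname{epi}\psi$ on the $\ge$-side and $A$ on the $\le$-side.

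The structure of $(u,s)$ then delivers everything. A recession argument in the $t$-coordinate of $\operatorname{epi}\psi$ yields $s \ge 0$; one must have $s > 0$, for otherwise $\langle u,\, z\rangle$ would be bounded below on all of $\mathbb{R}^m = \operatorname{dom}\psi$, forcing $u=0$ and contradicting $(u,s) \neq 0$. Normalizing $s = 1$, the linear functional $\langle u,\, z\rangle + t$ is bounded above on the affine subspace $A$, hence constant there; extracting the coefficient of $y$ from this constancy gives $T_0^* u + \xi = 0$. Evaluating the separation inequality at $(z,\psi(z)) \in \operatorname{epi}\psi$ reads $\psi(z) \ge \psi(T(x)) + \langle -u,\, z - T(x)\rangle$, so $v := -u$ lies in $\partial\psi(T(x))$ and $\xi = T_0^* v$, completing the argument.

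The main obstacle is ensuring that the $t$-component $s$ of the separating functional does not vanish, and that the $y$-affine part of the separating inequality forces the desired adjoint identity. Both are handled by the standing hypothesis that $\psi$ is \emph{finite} everywhere on $\mathbb{R}^m$, which serves as a built-in constraint qualification and makes $\operatorname{epi}\psi$ have full-dimensional interior. For merely proper lower-semicontinuous $\psi$, the analogous step would require a qualification such as $T(\mathbb{R}^n) \cap \ri(\operatorname{dom}\psi) \neq \emptyset$; under the present hypotheses this is automatic.
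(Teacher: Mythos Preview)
The paper does not prove this theorem; it is quoted verbatim as \cite[Theorem~4.2.1, p.~263]{HiLe96} and used as a black box in the subsequent discussion of partial subdifferentials. So there is no ``paper's proof'' to compare your proposal against.

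Your argument is correct and is essentially the standard proof one finds in the cited source. The easy inclusion is indeed just the adjoint computation you describe. For the hard inclusion, your separation of the affine set $A$ from the open strict epigraph $B=\operatorname{int}(\operatorname{epi}\psi)$ is the right move; the point $(T(x),\psi(T(x)))$ lies in both $A$ and $\overline{B}=\operatorname{epi}\psi$, so the separating functional is sharp there, which is what pins down $\alpha$ and turns the epigraph-side inequality into the subgradient inequality for $v=-u$ at $T(x)$. Your handling of the vertical component $s$ is also correct: the recession direction $(0,1)$ in $\operatorname{epi}\psi$ forces $s\ge 0$, and $s=0$ would make $\langle u,\cdot\rangle$ bounded below on $\operatorname{dom}\psi=\mathbb{R}^m$, hence $u=0$, contradicting nontriviality of the separating functional. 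Finally, boundedness of an affine functional on the affine subspace $A$ forces constancy, which in the $y$-variable reads $T_0^*u+\xi=0$. Everything closes up; the finiteness hypothesis on $\psi$ is exactly what makes the interior of the epigraph nonempty and what kills the degenerate $s=0$ case, as you note.
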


Following the discussion on \cite[p.\ 263]{HiLe96}, Let $f: \underbrace{\mathbb{R}^n \times \cdots \times \mathbb{R}^n}_{m\text{ times}}	\;\longrightarrow\; \mathbb{R}$
be a convex function in \(m\) blocks of variables, i.e.\ we write 
\((x_1, x_2, \dots, x_m)\) with each \(x_i \in \mathbb{R}^n\).  
Fix any \((x_2,\dots,x_m) \in \mathbb{R}^{n(m-1)}\), and define the affine map $ T: \mathbb{R}^n \to \mathbb{R}^{mn},\quad T(x_1) = (x_1, x_2, \dots, x_m)$ 
The linear part of \( T \) is \( T_0(x_1) = (x_1, 0, \dots, 0) \), and the corresponding adjoint is \( T_0^*(v_1, v_2, \dots, v_m) = v_1 \). Hence \( f \circ T \) is precisely the function \( x_1 \mapsto f(x_1, x_2, \dots, x_m) \).

\medskip

For each \(1 \le i \le m\), define the partial function
\[
f_i: \mathbb{R}^n \to \mathbb{R},
\quad
f_i(x_i) 
\;=\;
f\bigl(x_1,\dots,x_{i-1},\;x_i,\;x_{i+1},\dots,x_m\bigr),
\]
where the other \(x_j\) (with \(j \neq i\)) are held fixed. Applying~\ref{affinetheorem} to each such affine composition gives:
\[
\partial f_i(x_i)
\;=\;
\Bigl\{
v_i \in \mathbb{R}^n 
\;\Bigm|\;
\exists\,v_j \,(j\neq i)\text{ with }(v_1,\dots,v_m)\in \partial f(x_1,\dots,x_m)
\Bigr\}.
\]
Thus, \(\partial f_i(x_i)\) is the \emph{projection} of \(\partial f(x_1,\dots,x_m)\) onto the \(i\)th block. Consequently,
\begin{equation}\label{subset_inclusion}
	\partial f(x_1,\dots,x_m)
	\;\subseteq\;
	\partial f_1(x_1)
	\;\times\;
	\partial f_2(x_2)
	\;\times\;
	\cdots
	\;\times\;
	\partial f_m(x_m).
\end{equation}

\begin{remark}\label{remark:subgradient_equality}
	In general, the inclusion above in \ref{subset_inclusion}  is strict. However, if for each \(i\) the partial function \(f_i\) is differentiable at \(x_i\), then \(\partial f_i(x_i)\) reduces to a singleton \(\{\nabla f_i(x_i)\}\). Consequently, if all \(f_i\) are differentiable at the respective \(x_i\) for \(i = 1, \dots, m\), the subdifferential becomes as:
	\begin{equation}\label{sub_cartessian}
		\partial f(x_1,\dots,x_m)
		\;=\;
		\Bigl\{
		\bigl(\nabla f_1(x_1),\,\nabla f_2(x_2),\,\dots,\,\nabla f_m(x_m)\bigr)
		\Bigr\}.
	\end{equation}
\end{remark}

\begin{example}[Subdifferential of the pairwise distance]
	\label{ex:subdiff_distance}
	Let \( D(a_1, \dots, a_m) = \|a_1 - a_2\| \) denote the pairwise distance between \( a_1 \) and \( a_2 \) in \( \mathbb{R}^n \), $a_1 \ne a_2$ viewed as a function of the tuple \( (a_1, \dots, a_m) \in \mathbb{R}^{nm}\). The subdifferential of \( D \) with respect to the full tuple is given by the ordered tuple of partial subdifferentials:
	\[
	\partial D(a_1, \dots, a_m) = \bigl( \partial D_1(a_1),\, \partial D_2(a_2),\, \partial D_3(a_3),\, \dots,\, \partial D_m(a_m) \bigr),
	\]
	where \( \partial D_k(a_k) \) denotes the projection of \( \partial D(a_1, \dots, a_m) \) onto the \( k \)-th coordinate.
	
	By~\ref{remark:subgradient_equality} and the preceding discussion, we obtain:
	\[
	\partial D_1(a_1) = \frac{a_1 - a_2}{\|a_1 - a_2\|}, \qquad
	\partial D_2(a_2) = \frac{a_2 - a_1}{\|a_1 - a_2\|}, \qquad
	\partial D_k(a_k) = \mathbf{0} \quad \text{for all } k \ge 3.
	\]
	Applying the Cartesian product rule for subdifferentials see \eqref{sub_cartessian}, we conclude that
	\[
	\partial D(a_1, \dots, a_m) =
	\biggl(
	\frac{a_1 - a_2}{\|a_1 - a_2\|},\;
	\frac{a_2 - a_1}{\|a_1 - a_2\|},\;
	\mathbf{0}, \dots, \mathbf{0}
	\biggr).
	\]
	Thus, only the first two components contribute to the subdifferential while the remaining \( m - 2 \) components are zero.
\end{example}

Following is the most fundamental result regarding subdifferentials calculus and is widely used as sum rule.

\begin{theorem}[Moreau-Rockafellar Theorem {\cite[Corollary 3.21, p. 93]{MoNa23}}]\label{sumofsubgradient}
	Let \( \psi_i: \mathbb{R}^d \to (-\infty, +\infty] \), for \( i = 1, \dots, k \), be closed convex functions. Suppose there exists a point \( \bar{a} \in \bigcap_{i=1}^{k} \operatorname{dom} \psi_i \) at which all but at most one of the functions \( \psi_1, \dots, \psi_k \) are continuous. Then, for all \( a \in \bigcap_{i=1}^{k} \operatorname{dom} \psi_i \), the subdifferential of the sum satisfies the relation:
	\[
	\partial \left( \sum_{i=1}^{k} \psi_i \right)(a) = \sum_{i=1}^{k} \partial \psi_i(a) := \left\{ \sum_{i=1}^{k} w_i \mid w_i \in \partial \psi_i(a), \; i = 1, \dots, k \right\}.
	\]
\end{theorem}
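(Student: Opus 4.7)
The plan is to prove the identity $\partial\bigl(\sum_{i=1}^k \psi_i\bigr)(a) = \sum_{i=1}^k \partial\psi_i(a)$ by establishing the two inclusions separately. The inclusion $\sum_i \partial\psi_i(a) \subseteq \partial\bigl(\sum_i \psi_i\bigr)(a)$ is immediate from the definition: given $v_i \in \partial\psi_i(a)$, summing the $k$ subgradient inequalities $\psi_i(y) \geq \psi_i(a) + \langle v_i, y-a\rangle$ yields $\sum_i v_i \in \partial\bigl(\sum_i \psi_i\bigr)(a)$. No qualification condition is required here, and the step is purely mechanical.

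For the reverse inclusion I would first reduce to the case $k=2$ by induction on $k$. The continuity hypothesis propagates cleanly: the partial sum $\psi_1+\cdots+\psi_{k-1}$ is a finite sum of convex functions all but at most one of which are continuous at $\bar a$, hence itself continuous at $\bar a$, so pairing it with $\psi_k$ keeps the two-function hypothesis in force at the next induction step. For the base case, fix $v \in \partial(\psi_1+\psi_2)(a)$ and introduce $\phi_1(x) := \psi_1(x)-\psi_1(a)-\langle v,x-a\rangle$ and $\phi_2(x) := \psi_2(a)-\psi_2(x)$, so that $\phi_1(x) \geq \phi_2(x)$ for every $x$, with equality at $x=a$. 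In $\mathbb{R}^{d+1}$ I would then form the convex sets $A := \{(x,t) : t > \phi_1(x)\}$, the strict epigraph of $\phi_1$, and $B := \{(x,t) : t \leq \phi_2(x)\}$, the hypograph of the concave function $\phi_2$. These sets are disjoint, since a common point would yield $\phi_1(x) < t \leq \phi_2(x)$; applying the convex-set separation theorem and translating the resulting hyperplane through the shared boundary point $(a,0)$ furnishes a nonzero pair $(w,\beta) \in \mathbb{R}^d \times \mathbb{R}$ such that $\langle w,x-a\rangle + \beta t \geq 0$ on $A$ and $\leq 0$ on $B$.

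The main obstacle, and the sole place where the continuity qualification is actually consumed, is to rule out a vertical separator, i.e.\ to show $\beta \neq 0$. If $\beta=0$ then $\langle w,x-a\rangle \geq 0$ throughout $\operatorname{dom}\psi_1$ and $\leq 0$ throughout $\operatorname{dom}\psi_2$; evaluating at the common point $\bar a$ forces $\langle w,\bar a-a\rangle = 0$, while continuity of (say) $\psi_1$ at $\bar a$ guarantees an open neighbourhood of $\bar a$ inside $\operatorname{dom}\psi_1$ on which $\langle w,x-a\rangle \geq 0$, from which $w=0$ follows, contradicting $(w,\beta)\neq 0$. With $\beta \neq 0$ I would rescale to $\beta=1$ and set $v_2 := w$ and $v_1 := v-w$; inserting $(x,\phi_1(x))$ and $(x,\phi_2(x))$ into the separation inequalities unravels directly into $\psi_1(x) \geq \psi_1(a) + \langle v_1,x-a\rangle$ and $\psi_2(x) \geq \psi_2(a) + \langle v_2,x-a\rangle$ for every $x$. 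Hence $v_i \in \partial\psi_i(a)$ for $i=1,2$ with $v_1+v_2=v$, which gives the required decomposition and completes the proof.
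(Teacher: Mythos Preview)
The paper does not supply a proof of this theorem at all: it is stated in the Preliminaries section as a known result, cited verbatim from \cite[Corollary 3.21, p.~93]{MoNa23}, and is then used as a tool in the proof of Theorem~\ref{thm:optimality}. So there is no ``paper's own proof'' against which to compare your attempt.

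That said, your argument is the standard separation-based proof of the Moreau--Rockafellar sum rule and is essentially correct. One small slip in the induction paragraph: you assert that the partial sum $\psi_1+\cdots+\psi_{k-1}$ is continuous at $\bar a$, but this need not hold if the exceptional (non-continuous) function lies among $\psi_1,\dots,\psi_{k-1}$. The fix is trivial---either relabel so that the exceptional function is always $\psi_k$, or simply observe that in that case $\psi_k$ itself is continuous at $\bar a$, which is all the two-function step requires. With that adjustment the induction is clean, and the $k=2$ separation argument (strict epigraph of $\phi_1$ versus hypograph of $\phi_2$, ruling out a vertical separator via the interior point furnished by continuity) is correct as written.
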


\section{Optimality Condition}\label{Optimality_Conditions}
It is natural to ask whether problem~\eqref{objective} admits a solution. In this section, we show that a solution to the generalized problem indeed exists, even without assuming boundedness of the sets. We further establish an optimality condition that is both necessary and sufficient for the generalized waist problem~\eqref{objective}. This condition provides the theoretical foundation for the computational techniques presented in~\ref{Convergence}.

Let us begin with ensuring the existence of optimal solutions to \eqref{objective} under sufficient general conditions drawn from classical waist problem.

\begin{theorem}[\textbf{Existence}]\label{Existenceofsolution}
	The generalized waist problem~\ref{objective} admits an optimal solution if either all the sets \(C_1, \dots, C_m\) are bounded, or, in the case that whenever the set \(C_j\) is unbounded and points $a_j \in C_j$ with \(\|a_j\| \to \infty\) satisfies the condition that either $d_{C_{j-1}}(a_j) \to \infty$  or $d_{C_{j+1}}(a_j) \to \infty$ for all \(1 \leq j \leq m\).
\end{theorem}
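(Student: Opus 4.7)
The plan is to split into the two cases declared in the hypothesis. In the bounded case, each $C_i$ is nonempty, closed, and bounded in $\mathbb{R}^n$, hence compact; their Cartesian product $C = C_1\times\cdots\times C_m$ is then a compact subset of $\mathbb{R}^{mn}$. The objective $D$ in \eqref{Distance_Function} is a finite sum of norms composed with continuous affine maps, so it is continuous on $C$. The Weierstrass extreme value theorem then gives a minimizer immediately.

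For the unbounded case, the strategy is to establish coercivity of $D$ along feasible sequences and then reduce the problem to a minimization over a compact sublevel set. Fix any reference point $a^0 \in C$ and set $M := D(a^0)$. It suffices to show that the sublevel set $L_M := \{a \in C : D(a) \le M\}$ is bounded, because $L_M$ is closed (as a sublevel set of a continuous function intersected with the closed set $C$), hence compact, and $D$ continuous on $L_M$ then attains its infimum there, which coincides with $\inf_{a\in C} D(a)$. To prove boundedness of $L_M$, I would argue by contradiction: suppose $\{a^{(k)}\} \subset L_M$ with $\|a^{(k)}\| \to \infty$. Since $\|a^{(k)}\|^2 = \sum_{j=1}^m \|a_j^{(k)}\|^2$, there exists an index $j$ and a subsequence (still denoted by $k$) along which $\|a_j^{(k)}\| \to \infty$; since $a_j^{(k)} \in C_j$ for all $k$, the set $C_j$ must be unbounded.

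The hypothesis then forces either $d_{C_{j-1}}(a_j^{(k)}) \to \infty$ or $d_{C_{j+1}}(a_j^{(k)}) \to \infty$. Because $a_{j-1}^{(k)}\in C_{j-1}$ and $a_{j+1}^{(k)}\in C_{j+1}$, the definition of the distance function yields
\[
\|a_j^{(k)} - a_{j-1}^{(k)}\| \;\ge\; d_{C_{j-1}}(a_j^{(k)}) \qquad \text{and} \qquad \|a_j^{(k)} - a_{j+1}^{(k)}\| \;\ge\; d_{C_{j+1}}(a_j^{(k)}),
\]
so at least one of the two neighboring terms in the cyclic sum defining $D(a^{(k)})$ diverges, giving $D(a^{(k)}) \to \infty$. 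This contradicts $D(a^{(k)}) \le M$ and proves boundedness of $L_M$.

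The only delicate point is the subsequence bookkeeping: one must extract a single index $j$ for which $\|a_j^{(k)}\| \to \infty$ and then invoke the hypothesis for exactly that $j$, which a standard pigeonhole-plus-subsequence argument handles cleanly. Beyond that the argument is routine, because the indicator reformulation \eqref{unconstrainedobjective} together with the continuity of $D$ on $C$ bypasses the need to upgrade Theorem \ref{coercivity_theorem} to a lower semicontinuous setting; the compactness of $L_M$ supplies the minimizer directly.
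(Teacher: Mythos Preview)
Your proposal is correct and follows essentially the same two-case split as the paper: compactness of the product $C$ plus continuity of $D$ in the bounded case, and a coercivity argument in the unbounded case. Your Case~2 is in fact slightly more careful than the paper's, which asserts coercivity of $D$ and then invokes Theorem~\ref{coercivity_theorem} (stated for unconstrained minimization on $\mathbb{R}^n$), whereas you work directly with the sublevel set $L_M \subset C$ and extract the diverging coordinate via a subsequence argument before applying the distance-function lower bound; both routes arrive at the same conclusion.
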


\begin{proof}
	We prove the existence of a minimizer by considering two cases based on the boundedness of the sets $C_1, \dots, C_m$.
	\vspace{0.2cm}
	
	\paragraph{Case 1: Suppose all the sets $C_1, \dots, C_m$ are bounded}
	Since each $C_i$ is bounded and closed, their Cartesian product
	$C = C_1 \times C_2 \times \cdots \times C_m$ is bounded and closed in $\mathbb{R}^{mn}$, hence compact. Let $\{(a_1^{(k)}, \dots, a_m^{(k)})\}_{k=1}^\infty \subset C$ be a minimizing sequence, i.e.,
	\begin{equation}
		\lim_{k \to \infty} D(a_1^{(k)}, \dots, a_m^{(k)}) = \inf_{(a_1, \dots, a_m) \in C} D(a_1, \dots, a_m) =: \gamma_{\min}.
	\end{equation}
	Since $C$ is compact, by the Bolzano--Weierstrass theorem, there exists a subsequence $\{(a_1^{(k_r)}, \dots, a_m^{(k_r)})\}$ converging to some point $(a_1^*, \dots, a_m^*) \in C$. The objective function $D(a_1, \dots, a_m)$ is continuous (as it is composed of finite sums of continuous norms), thus passing the limit inside yields
	$	D(a_1^*, \dots, a_m^*) = \lim_{r \to \infty} D(a_1^{(k_r)}, \dots, a_m^{(k_r)}) = \gamma_{\min}.$ Therefore, $(a_1^*, \dots, a_m^*)$ is a global minimizer of $D$ over $C$.
	\vspace{0.3cm}
	
	\paragraph{Case 2:Suppose the set $C_j$ is unbounded}
	We can find points $a_j \in C_j$ with $\|a_j\| \to \infty$. By assumption, as $\|a_j\| \to \infty$, we have either
	$d(a_j, C_{j-1}) \to \infty$ or $d(a_j, C_{j+1}) \to \infty,$ which implies $\|a_j - a_{j-1}\| \to \infty ~ \forall a_{j-1} \in C_{j-1}$ or $\|a_j - a_{j+1}\| \to \infty, ~ \forall a_{j+1} \in C_{j+1}.$ Thus, the objective function $D(a_1, \dots, a_m)$ satisfies that $D(a_1, \dots, a_m) \to \infty$ when $\|(a_1, \dots, a_m)\| \to \infty$. That is, $D$ is coercive in this case. By the~\ref{coercivity_theorem}, coercivity of a continuous function ensures the existence of a minimizer. Thus, $D$ attains its minimum over $C$.	
	In both cases, we conclude that an optimal solution exists.
\end{proof}
\begin{remark}[\textit{Multiple Solutions in the generalized waist problem}]
	Under our proposed formulation of the generalized waist problem, consider the case of three parallel lines in $\mathbb{R}^3$ that are not all contained within the same plane (see~\ref{fig:parallel-lines}). This configuration leads to \emph{multiple solutions} to the minimization problem. Specifically, if we take a plane \( P \) that is perpendicular to one (and hence all) of the lines, the points where this plane intersects each line form an optimal solution.	
\end{remark}
In fact, every such intersection yields a triangle with the same total cyclic distance. Consequently, the set of optimal solutions consists of all such triplets of intersection points generated by varying the position of the perpendicular plane \( P \). This example illustrates that the uniqueness of the solution is not guaranteed without additional assumptions, such as strict convexity of the sets or non-parallel positioning. It is worth mentioning here that coercivity does not occur in this case; however, there are infinitely many optimal solutions
\begin{figure}[H]
	\centering
	\includegraphics[scale=0.15]{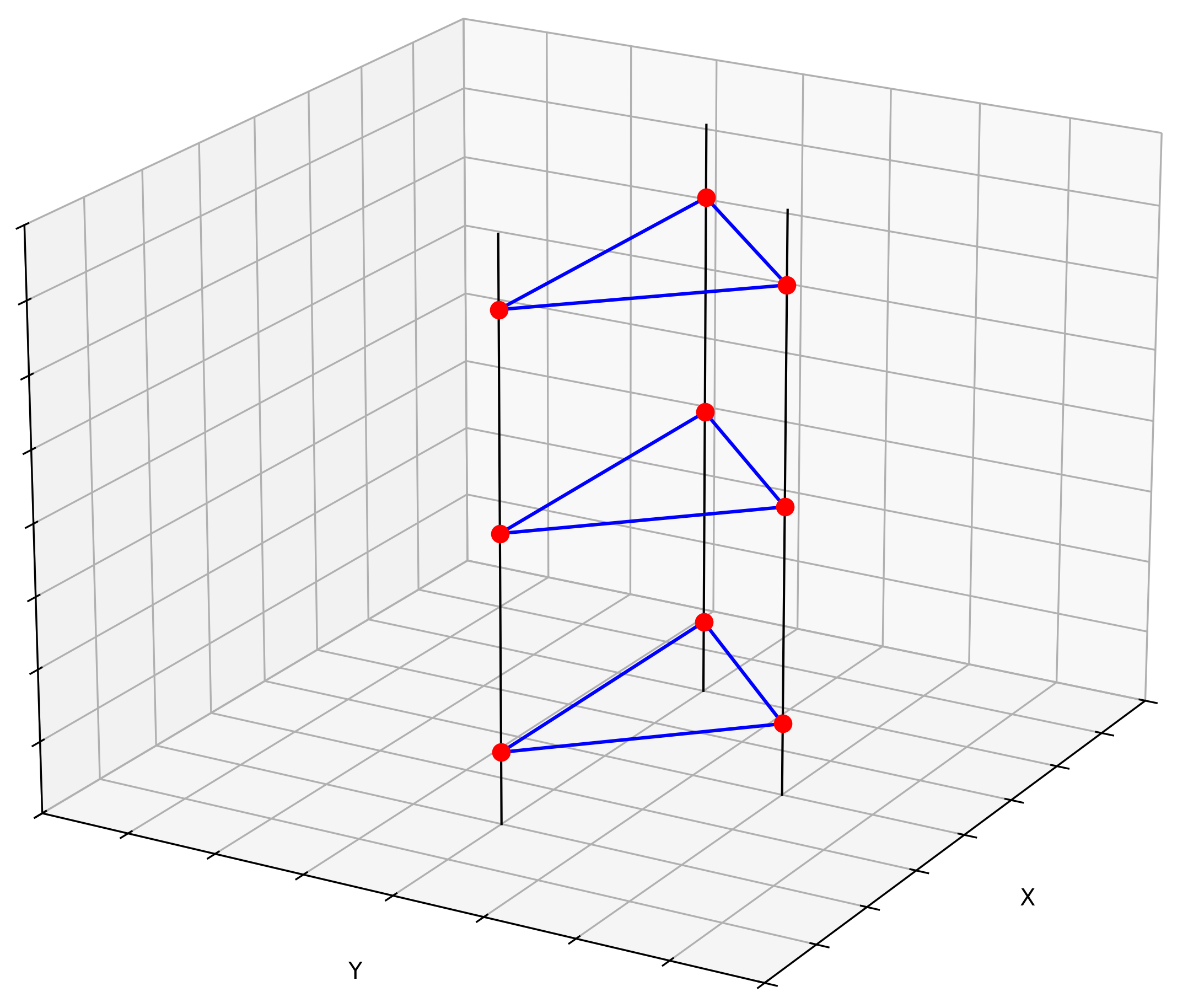}
	\caption{Three parallel lines not lying in the same plane. Multiple optimal solutions may occur.}
	\label{fig:parallel-lines}
\end{figure}
\begin{remark}
	Inspired by Exercise 1.1.29 of~\cite{AnMuSt07}, we construct a genuinely nontrivial example of the generalized waist problem 
	that admits \emph{infinitely many} solutions.
\end{remark}%

Enforcing the conditions on the four rectangles gives rise to the solution to be a parallelogram whose vertices are on the side of rectangles such that the law of reflection is followed on at least one vertex so that the law of reflection is followed automatically on other vertices.

\begin{figure}[H]
	\centering
	\includegraphics[scale=0.12]{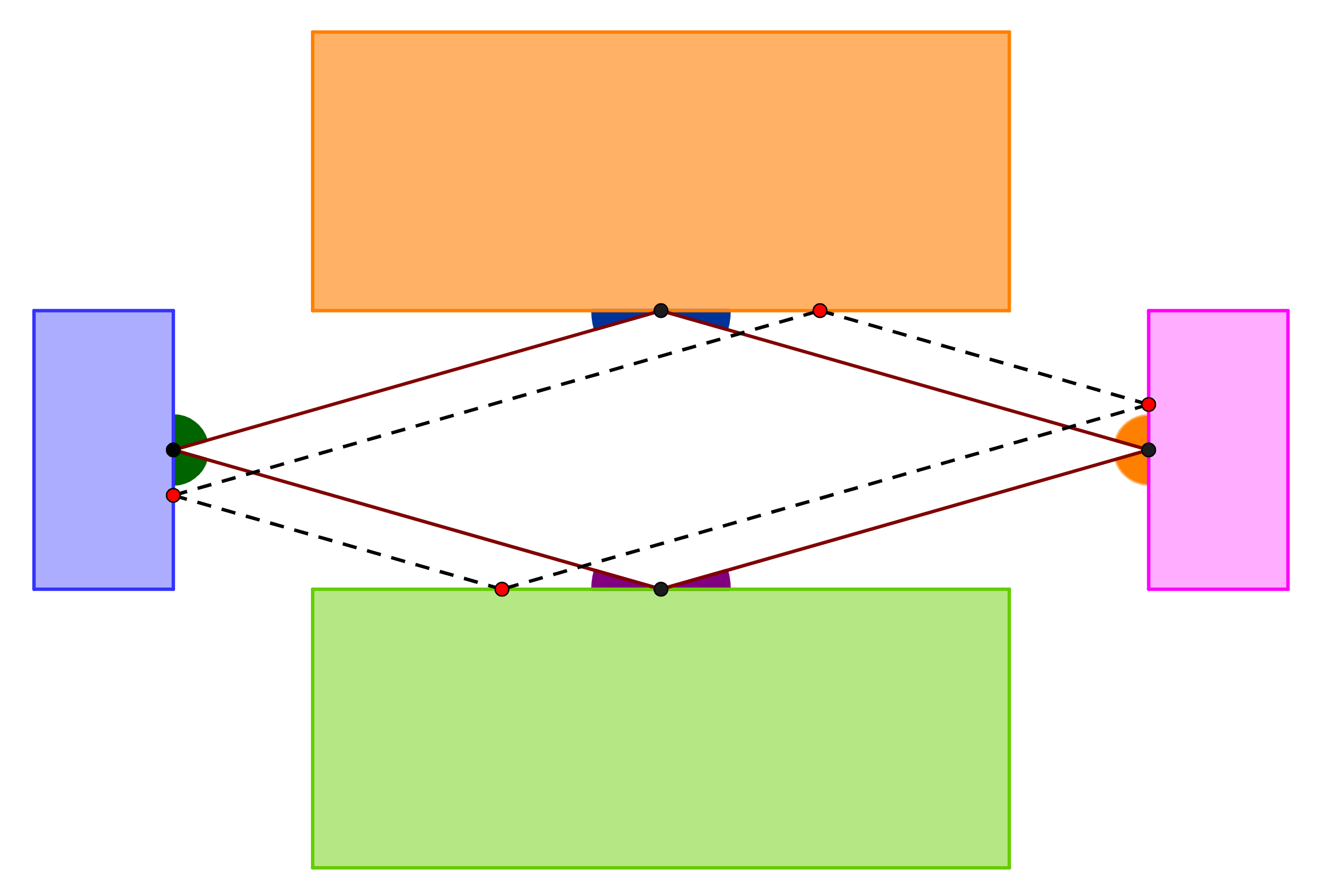}
	\caption{A Non-trivial example of generalized waist problem which has multiple solutions.}
	\label{fig:four_square}
\end{figure}

This naturally raises a fundamental question: \emph{under what mild conditions can we guarantee the uniqueness of the solution to the generalized waist problem?} Interestingly, it turns out that uniqueness can be ensured if the sets \(C_1, C_2, \dots, C_m\) are in \emph{general position} and at least one of them is strictly convex (see~\ref{Unique_Solution}).

To make this precise, we introduce the notion of general position for the sets \(C_i\):

\begin{definition}
	Let \(C_1, C_2, \dots, C_m \subset \mathbb{R}^n\) be nonempty, closed, convex sets. 
	For each \(i \in \{1, \dots, m\}\), define
	\[
	K_i := \mathrm{conv}\!\left(
	C_1 \cup \cdots \cup C_{i-1} \cup C_{i+1} \cup \cdots \cup C_m
	\right).
	\]
	We say that the collection \(\{C_i\}_{i=1}^m\) is in \emph{general position} if
	$C_i \cap K_i = \varnothing \quad \text{for all } i = 1, \dots, m.$
\end{definition}

With the structural condition of general position of the sets, we now turn to basic geometric observations that will help us to understand the boundary behavior of the feasible set.

\begin{remark}
	Let \( C_i \subseteq \mathbb{R}^{n} \) be nonempty convex sets for \( i = 1, \dots, m \). Define \( C = \prod_{i=1}^{m} C_i \).
	Then
	\[
	\bd(C) \supseteq \prod_{i=1}^{m} \bd(C_i),
	\]
	and this inclusion can be strict.
\end{remark}

\begin{example}
	Consider the convex sets \(C_1 = C_2 = [1,2] \subseteq \mathbb{R}\). Their Cartesian product is given by
	\[
	C = C_1 \times C_2 = [1,2] \times [1,2] = \{(x,y) \mid 1 \leq x \leq 2, \; 1 \leq y \leq 2\},
	\]
	which is a square in \(\mathbb{R}^2\).
	
	The boundary of each set \(C_i\) is	$\bd(C_i) = \{1,2\}, \quad i=1,2.$
	Thus,
	\[
	\bd(C_1) \times \bd(C_2) = \{1,2\} \times \{1,2\} = \{(1,1), (1,2), (2,1), (2,2)\}.
	\]
	However, the boundary of \(C\) itself includes all points along the four edges of the square. Clearly,
	\[
	\bd(C_1) \times \bd(C_2) = \{(1,1), (1,2), (2,1), (2,2)\} \subsetneq \bd(C).
	\]
	This explicitly illustrates that the inclusion
	\[
	\bd\left(\prod_{i=1}^{m} C_i\right) \supseteq \prod_{i=1}^{m} \bd(C_i)
	\]
	may hold strictly.
\end{example}

\begin{lemma}\label{interior_Distance_lemma}
	
	Suppose $C_i, i=1,2.\cdots, m$ are in general position and  $x = (x_1\allowbreak, x_2\allowbreak, \dots\allowbreak, x_m) \in C$ where  $C=C_1 \times C_2 \times \cdots \times C_m$. If there is at least one component \(x_i\in \interior(C_i)\), then there exists a point 
	$\bar{x} = (\bar{x}_1,\bar{x}_2,\dots,\bar{x}_m)\in \bd(C),$ with
	$\bar{x}_i\in \bd(C_i)$ for each $i=1,\dots,m,$	such that
	\[
	D(\bar{x}) \;<\; D(x).
	\]
\end{lemma}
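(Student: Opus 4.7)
The plan is to exploit the two--neighbour structure of $D$ and push the components to the boundary one coordinate at a time. For any index $i$, varying only the $i$-th coordinate changes $D$ by exactly the partial function
\[
g_i(w) \;=\; \|w - x_{i-1}\| \;+\; \|w - x_{i+1}\|,
\]
where $x_{i\pm 1}$ denote the current values of the neighbouring coordinates (with cyclic indexing $x_{m+1}=x_1$). By the triangle inequality, the set of unconstrained minimizers of $g_i$ on $\mathbb{R}^n$ is precisely the segment $[x_{i-1},x_{i+1}]$, and $g_i$ is continuous and coercive, hence attains its minimum over the closed set $C_i$ at some $\tilde{x}_i$.

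The central observation is that general position forces this minimizer segment to be disjoint from $C_i$: since $x_{i-1}\in C_{i-1}$ and $x_{i+1}\in C_{i+1}$ both lie in $K_i$, convexity of $K_i$ gives $[x_{i-1},x_{i+1}]\subseteq K_i$, and $K_i\cap C_i=\varnothing$ by hypothesis. I would then argue that no $w\in\interior(C_i)$ can be a minimizer of $g_i$ on $C_i$: applying the Moreau--Rockafellar sum rule to $g_i+\delta_{C_i}$ at such a $w$ would give $0\in\partial g_i(w)+N_{C_i}(w)$, and since $N_{C_i}(w)=\{0\}$ at an interior point, this forces $0\in\partial g_i(w)$, so $w$ would minimize $g_i$ on all of $\mathbb{R}^n$ and therefore lie in $[x_{i-1},x_{i+1}]$, a contradiction. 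Consequently every minimizer $\tilde{x}_i$ of $g_i$ on $C_i$ lies in $\bd(C_i)$, and whenever the current value of the $i$-th coordinate lies in $\interior(C_i)$ one has $g_i(\tilde{x}_i)<g_i(x_i)$, which amounts to a strict decrease of $D$.

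To complete the proof I would process the indices cyclically $i=1,2,\dots,m$, at step $i$ replacing the current $i$-th coordinate by any $\tilde{x}_i\in\mathrm{argmin}_{C_i}g_i$. Step $i$ does not modify the other coordinates, so boundary placements achieved at earlier steps persist; moreover the obstruction of the previous paragraph remains valid at every step because $K_i$ depends only on the sets $C_j$, not on which points within them are currently selected. After this single pass, every coordinate lies on $\bd(C_i)$, so $\bar{x}\in\prod_{i=1}^m\bd(C_i)\subseteq\bd(C)$, and the total change in $D$ is a sum of nonpositive increments with at least one strictly negative—contributed at the hypothesised index $i$ with $x_i\in\interior(C_i)$, processed while the $i$-th coordinate still equals the original $x_i$. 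The main subtlety is precisely this bookkeeping: that previously updated coordinates remain on the boundary under subsequent steps, and that the strict decrement produced at the interior step cannot be undone later. Both follow immediately because each coordinate is modified at exactly one step and each step is weakly monotone in $D$.
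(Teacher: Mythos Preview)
Your argument is correct and follows a different, more analytic route than the paper. The paper gives an explicit geometric construction: for an interior $x_j$ it extends the two segments $[x_j,x_{j-1}]$ and $[x_j,x_{j+1}]$ to boundary points $y_1,z_1\in\bd(C_j)$, and then pushes $x_j$ along the ray from $x_j$ through $\tfrac{1}{2}(y_1+z_1)$ until it hits $\bd(C_j)$, appealing to the general-position hypothesis (so that $C_j$ and $\operatorname{conv}(C_{j-1}\cup C_{j+1})$ are disjoint) to argue that this move strictly shortens the two adjacent edges. You instead take $\tilde{x}_i\in\mathrm{argmin}_{C_i}g_i$ and use the normal-cone criterion to show that no interior point can minimise $g_i$ on $C_i$, because general position forces the unconstrained minimiser segment $[x_{i-1},x_{i+1}]\subseteq K_i$ to miss $C_i$. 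This replaces the picture-based reasoning by a clean first-order obstruction already available from the paper's toolbox. Your sequential coordinate pass also handles the case of several interior coordinates more transparently: the paper defines each $\bar{x}_j$ relative to the \emph{original} neighbours $x_{j\pm1}$ and then asserts $D(\bar{x})<D(x)$ without tracking how the cross terms $\|\bar{x}_j-\bar{x}_{j\pm1}\|$ behave when two adjacent coordinates are both moved, whereas your step-by-step monotonicity makes the strict total decrease explicit.
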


\begin{proof}
	Given \(x = (x_1,x_2,\dots,x_m)\in C\), assume that at least one component \(x_i\) lies in \(\interior(C_i)\). We construct a point \(\bar{x} = (\bar{x}_1, \bar{x}_2, \dots, \bar{x}_m)\) by choosing its components \(\bar{x}_j\) as follows:
	\begin{enumerate}
		\item If \( x_j \in \bd(C_j) \), define \(\bar{x}_j = x_j\).
		\item If \( x_j \in \interior(C_j) \), select a boundary point \(\bar{x}_j \in \bd(C_j)\) such that
		\[
		\| \bar{x}_j - x_{j-1} \| + \|\bar{x}_j - x_{j+1} \| < \|x_j - x_{j-1} \| + \|x_j - x_{j+1} \|.
		\]
	\end{enumerate}
	We show that such a choice for \(\bar{x}_j\) can always be achieved. Without loss of generality assume that $x_1 \in \interior(C_1)$, consider the line segments joining $x_1$ to $x_2$ and $x_1$ to $x_m$. Let $y_1$ and $z_1$ be the boundary points of $C_1$ obtained on the extensions of the line segments $[x_1, x_2]$ and $[x_1, x_m]$, where we define \([x_1,x_2] := \{\lambda x_1 + (1-\lambda)x_2 \mid 0 \le \lambda \le 1\}\). Extending the line segment joining $x_1$ and $\frac{y_1+z_1}{2}$ must meet the boundary of $C_1$ at some point $\bar{x}_1$. Since $C_i$'s are in general position this also means that $C_{j-1}, C_j, C_{j+1}$ can be treated as vertices of a plane triangle, and note that $C_1 \cap \operatorname{conv}(C_2 \cup C_m)= \varnothing$, thus 	
	$\bar{x}_1$ is such that $\| \bar{x}_1 - x_{2} \|  + \|\bar{x}_1 - x_{m} \|< \|x_1 - x_{2} \| +\|x_1 - x_{m} \|$ ,see~\ref{fig:strict_Convex_lemmaimage} for geometric intuition.
	
	By this construction, every component \(\bar{x}_j\) of \(\bar{x}\) satisfies \(\bar{x}_j \in \bd(C_j)\). This ensures that \(\bar{x}\) is in $\bd(C)$.
	
	Since \(x\) has at least one component \(x_i \in \interior(C_i)\), at least one component is modified in the construction of \(\bar{x}\). The point \(\bar{x}_i \in \bd(C_i)\) is chosen specifically to reduce the distances to its neighbours \(x_{i-1}\) and \(x_{i+1}\). Because \(x_i\) is in \(\interior(C_i)\) and \(\bar{x}_i \in \bd(C_i)\) is obtained in the preceding manner, it is guaranteed that \(D(\bar{x}) < D(x)\).
\end{proof}%

\begin{remark}\label{Boundary_Remark}
If $C_j$ are in general position, the problem \eqref{objective}  is equivalent to
	
	\begin{equation}\label{Boundary_Objective}
		\min_{\bar{x} \in C} D(\bar{x}) 
		= \min_{\substack{\bar{x}_i \in \bd(C_i) \\ 1 \leq i \leq m}} D(\bar{x}_1, \bar{x}_2, \dots, \bar{x}_m)
	\end{equation}
\end{remark}

\begin{figure}[H]
	\centering
	\includegraphics[scale=0.12]{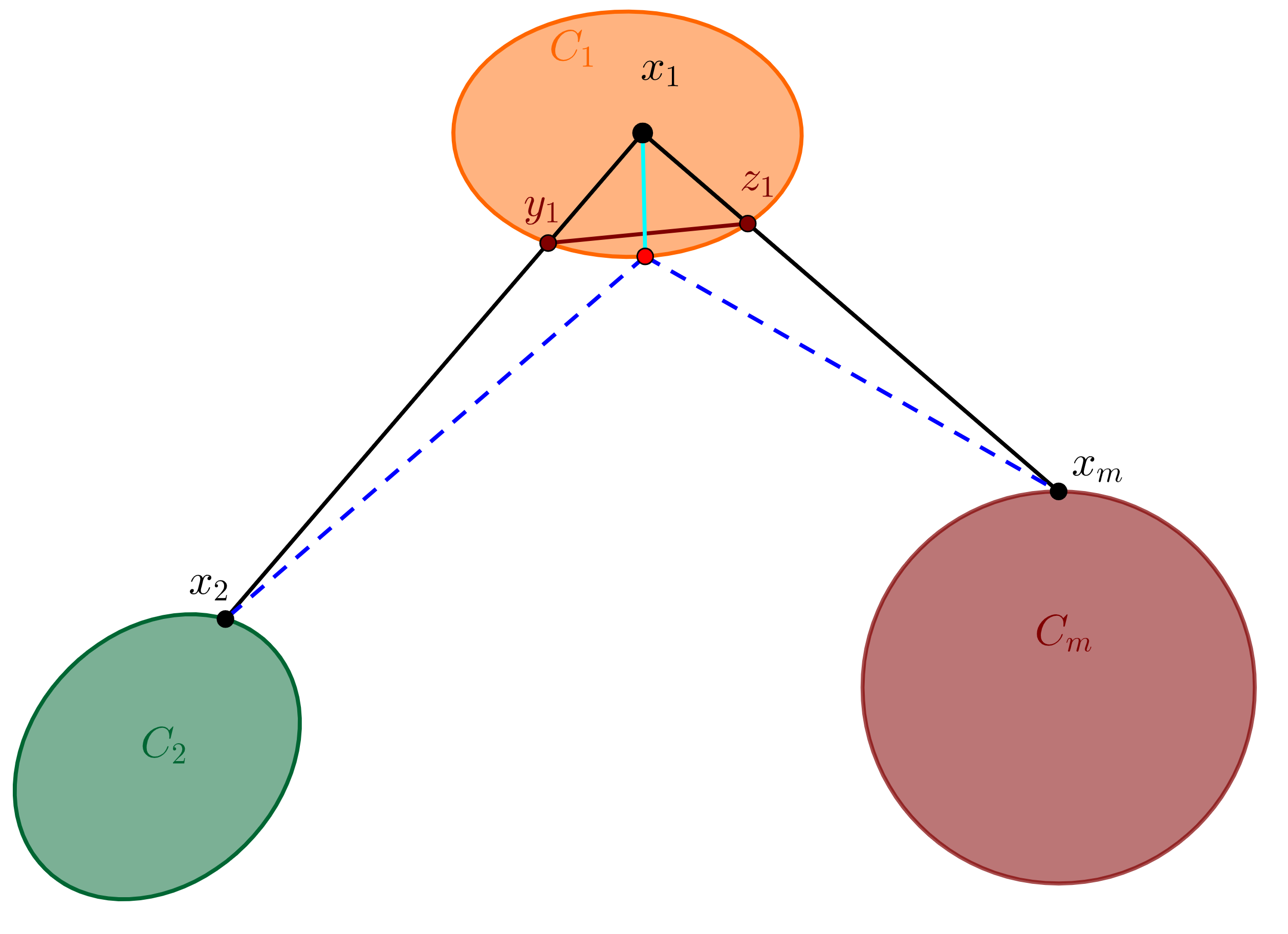}
	\caption{Minimum occur on the boundaries.}

	\label{fig:strict_Convex_lemmaimage}
\end{figure}

\begin{theorem}[\textbf{Uniqueness}]\label{Unique_Solution}
	Suppose the sets \(C_i\), for \(i = 1, 2, \dots, m\), are in general position. If all the sets \(C_i\) are strictly convex, then the generalized waist problem~\eqref{objective} admits a unique solution.
\end{theorem}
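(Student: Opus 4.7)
The plan is to argue by contradiction, exploiting the convexity of the objective $D$ together with the strict convexity of the sets $C_i$, and then invoking \ref{interior_Distance_lemma} to derive a contradiction. Since existence is already guaranteed by \ref{Existenceofsolution} (or equivalently by the boundary reformulation in \ref{Boundary_Remark}), the entire argument reduces to ruling out two distinct minimizers.

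Concretely, suppose for contradiction that $a^{*}=(a_{1}^{*},\dots,a_{m}^{*})$ and $b^{*}=(b_{1}^{*},\dots,b_{m}^{*})$ are two distinct optimal solutions, both attaining the optimal value $\gamma_{\min}=\min_{a\in C}D(a)$. First, I would form the midpoint $c^{*}=\tfrac{1}{2}(a^{*}+b^{*})$. Since $C=C_{1}\times\cdots\times C_{m}$ is convex, $c^{*}\in C$; and since $D$ is a sum of Euclidean norms (hence convex), we have
\[
D(c^{*})\;\le\;\tfrac{1}{2}D(a^{*})+\tfrac{1}{2}D(b^{*})\;=\;\gamma_{\min},
\]
so $c^{*}$ is itself a minimizer. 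Next, because $a^{*}\ne b^{*}$, there exists some index $i$ with $a_{i}^{*}\ne b_{i}^{*}$; strict convexity of $C_{i}$ then forces $c_{i}^{*}=\tfrac{1}{2}(a_{i}^{*}+b_{i}^{*})\in\interior(C_{i})$. Thus the optimal point $c^{*}$ has at least one component in the interior of its corresponding set.

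Now I would invoke \ref{interior_Distance_lemma}, which under the standing hypothesis of general position produces a point $\bar{x}\in\bd(C)$ with $D(\bar{x})<D(c^{*})=\gamma_{\min}$. This contradicts the optimality of $c^{*}$, so no two distinct minimizers can exist, proving uniqueness.

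The main obstacle is really conceptual rather than computational: one has to make sure the two hypotheses (general position and strict convexity of \emph{every} $C_{i}$) are used in the correct places. Strict convexity is needed only to push the midpoint of distinct boundary points into the interior, while general position is needed so that \ref{interior_Distance_lemma} can actually produce a strictly better boundary point (the lemma's construction relies on $C_{j}\cap\mathrm{conv}(C_{j-1}\cup C_{j+1})=\varnothing$ to guarantee the strict triangle-type inequality). A minor subtlety worth double-checking is the trivial edge case in which several consecutive components of the minimizers coincide, ensuring that the differing index $i$ can still be used to place $c_{i}^{*}$ in $\interior(C_{i})$; this is immediate from the definition of strict convexity once $a_{i}^{*}\ne b_{i}^{*}$ is fixed.
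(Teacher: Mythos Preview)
Your proposal is correct and follows essentially the same route as the paper: both argue by contradiction, form the midpoint of two putative minimizers, use strict convexity of some $C_i$ to place a component of the midpoint in $\interior(C_i)$, and then invoke Lemma~\ref{interior_Distance_lemma} together with the convexity of $D$ to obtain a contradiction. The only cosmetic difference is the order of the two inequalities: you first observe $D(c^{*})=\gamma_{\min}$ and then produce $\bar{x}$ with $D(\bar{x})<\gamma_{\min}$, whereas the paper first argues $D\bigl(\tfrac{X+Y}{2}\bigr)>\gamma$ and then contrasts this with the convexity bound $D\bigl(\tfrac{X+Y}{2}\bigr)\le\gamma$.
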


\begin{proof}
	Suppose, there exist two distinct optimal solutions \(X=(x_1,x_2,\dots,x_m)\) and \(Y=(y_1,y_2,\dots,y_m)\) with  optimal value $\gamma$. Clearly, by Remark \ref{Boundary_Remark}, each component of  $X$ and $Y$ are lying on the boundary of respective sets. Then we have,
	\[
	D(X) = D(Y) = \gamma.
	\]
	
Since $X \neq Y$, without loss of generality, assume $x_1 \neq y_1.$ Being  \(C_1\) is strictly convex, we have
	\[
	\frac{x_1+y_1}{2}\in\interior(C_1),
	\]
	Invoking Lemma~\ref{interior_Distance_lemma}, and strict convexity of the set $C_1$ implies:
	\begin{equation}\label{Dgeq}
		D\left(\frac{X+Y}{2}\right) > D(X) = D(Y) = \gamma.
	\end{equation}
	However, the convexity of the function \(D(\cdot)\) implies:
	\begin{equation}\label{Dleq}
		D\left(\frac{X+Y}{2}\right) \leq \frac{D(X)+D(Y)}{2} = \gamma.
	\end{equation}
	Clearly, the inequalities \(\eqref{Dgeq}\) and \(\eqref{Dleq}\) contradict each other, This ensures uniqueness of the optimal solution.
\end{proof}

We have now established both the existence and uniqueness of the generalized waist problem \eqref{objective}. Our next goal is to derive its optimality conditions. The theorem below provides necessary and sufficient conditions for a solution of \eqref{objective} to be optimal, laying the foundation for our numerical algorithms. As before, the concepts of subdifferentials and normal cones will be essential in formulating and proving these results.

\begin{theorem}\label{thm:optimality}
	Let $C_1, C_2, \ldots, C_m \subseteq \mathbb{R}^n$ be nonempty, closed, convex sets which are pairwise disjoint, that is  $C_i \cap C_j =\varnothing$ for $i \ne j$. Then a point 
	$a^* = (a_1^* \allowbreak, a_2^* \allowbreak, \dots \allowbreak, a_m^*) \in C$	is an optimal solution to \eqref{objective} if and only if there exist vectors 
	$n_i\in N_{C_i}(a_i^*)$  such that for each $i=1, \ldots, m$, we have
	\begin{equation}
		\frac{a_i^* - a_{i-1}^*}{\|a_i^* - a_{i-1}^*\|} + \frac{a_i^* - a_{i+1}^*}{\|a_i^* - a_{i+1}^*\|} + n_i = 0,
		\label{optimality_vector}
	\end{equation}
	where the indices are interpreted cyclically, that is, $a_0^*=a_m^*$ and $a_{m+1}^*=a_1^*$. Moreover, summing these equations from $i=1$ to $m$ yields
	\begin{equation}\label{normal_vector}
		n_1+n_2+\dots+n_m=0.
	\end{equation}
\end{theorem}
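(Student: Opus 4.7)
The plan is to apply Fermat's rule to the unconstrained reformulation \eqref{unconstrainedobjective} and then peel the resulting subdifferential condition apart using the sum rule and the product rule for normal cones. Setting $F(a) := D(a) + \delta_C(a)$, the point $a^*$ is optimal if and only if $0 \in \partial F(a^*)$. Since $D$ is finite and continuous on all of $\mathbb{R}^{mn}$, the Moreau--Rockafellar theorem \ref{sumofsubgradient} gives $\partial F(a^*) = \partial D(a^*) + \partial \delta_C(a^*)$, and by \eqref{subgradientofindicator} together with \ref{Normalcone_Product} we obtain
\[
\partial \delta_C(a^*) \;=\; N_{C_1}(a_1^*) \times N_{C_2}(a_2^*) \times \cdots \times N_{C_m}(a_m^*).
\]

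Next, I would compute $\partial D(a^*)$ block by block. The pairwise disjointness hypothesis $C_i \cap C_j = \varnothing$ forces $a_i^* \neq a_{i+1}^*$ for every cyclic pair $(i,i+1)$, so each summand $\|a_i - a_{i+1}\|$ is differentiable at $a^*$ and the formula of \ref{subgradientformula} specializes, via \eqref{subgradientformula_forsingleton}, to a single unit vector. By \ref{ex:subdiff_distance} and \eqref{sub_cartessian}, the $i$-th block of $\nabla D(a^*)$ receives contributions only from the two summands $\|a_{i-1}-a_i\|$ and $\|a_i-a_{i+1}\|$ that actually involve $a_i$, yielding
\[
\frac{a_i^*-a_{i-1}^*}{\|a_i^*-a_{i-1}^*\|} \;+\; \frac{a_i^*-a_{i+1}^*}{\|a_i^*-a_{i+1}^*\|}.
\]
Requiring the $i$-th block of $0 \in \nabla D(a^*) + N_C(a^*)$ to vanish, with a choice of $n_i \in N_{C_i}(a_i^*)$, reproduces exactly \eqref{optimality_vector}. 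Because every tool invoked (Fermat's rule, Moreau--Rockafellar, the normal-cone product rule, and the gradient formula for each smooth summand) is an equivalence in the convex setting, the derived condition is simultaneously necessary and sufficient.

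Finally, to obtain \eqref{normal_vector} I would simply sum \eqref{optimality_vector} over $i=1,\dots,m$. Reindexing the first family of unit vectors by $j = i-1$ turns $\sum_i \tfrac{a_i^*-a_{i-1}^*}{\|a_i^*-a_{i-1}^*\|}$ into $\sum_j \tfrac{a_{j+1}^*-a_j^*}{\|a_{j+1}^*-a_j^*\|}$, which is precisely the termwise negative of the second family $\sum_i \tfrac{a_i^*-a_{i+1}^*}{\|a_i^*-a_{i+1}^*\|}$; the two cancel telescopically, leaving only $\sum_i n_i = 0$.

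There is no serious analytical obstacle here: the qualification condition for Moreau--Rockafellar holds automatically because $D$ is continuous on the whole space, and pairwise disjointness completely sidesteps the set-valued branch $N_C(a)\cap\mathbb{B}$ of \ref{subgradientformula} that would otherwise appear if two consecutive $a_i^*$ coincided. The only real care required is cyclic index bookkeeping, making sure each summand $\|a_i - a_{i+1}\|$ is credited to precisely the two coordinate blocks on which it depends.
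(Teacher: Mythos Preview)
Your proposal is correct and follows essentially the same route as the paper: reformulate via $F=D+\delta_C$, apply Fermat's rule, split with Moreau--Rockafellar and the normal-cone product rule, compute $\partial D(a^*)$ termwise using differentiability of each $\|a_i-a_{i+1}\|$ (guaranteed by pairwise disjointness), and then sum to obtain \eqref{normal_vector}. If anything, your write-up is slightly more explicit than the paper's about why the qualification condition holds and why the non-smooth branch of \ref{subgradientformula} never arises.
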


\begin{proof}
	
	We start by rewriting the constrained problem \eqref{objective} as the unconstrained minimization of 
	\begin{equation}
		F(a)=D(a)+\delta_C(a),
	\end{equation}
	By Fermat's rule (or the subgradient optimality condition) for convex functions, if $a^*$ minimizes $F(a)$ then
	\[
	0\in\partial F(a^*)=\partial\Bigl[D+\delta_C\Bigr](a^*).
	\]
	The sum rule for subdifferentials implies
	\[
	\partial\Bigl[D+\delta_C\Bigr](a^*)=\partial D(a^*)+\partial \delta_C(a^*).
	\]
	Moreover, it is well known from \eqref{subgradientofindicator} that 
	\[
	\partial\delta_C(a^*)=N_C(a^*),
	\]
	where $N_C(a^*)$ is the normal cone to $C$ at $a^*$. Since $
	C=C_1\times C_2\times\cdots\times C_m,$ we know(from \ref{Normalcone_Product}) $N_C(a^*)=N_{C_1}(a_1^*)\times N_{C_2}(a_2^*)\times\cdots\times N_{C_m}(a_m^*).$
	Hence, the optimality condition becomes
	\begin{equation}
		0\in \partial D(a^*)+ \Bigl( N_{C_1}(a_1^*)\times N_{C_2}(a_2^*)\times\cdots\times N_{C_m}(a_m^*) \Bigr).
		\label{eq:opt_cond1}
	\end{equation}
	
	We now compute the subdifferential \(\partial D(a^*)\) at an optimal point \(a^*\), where \(a^* = (a_1^*, \dots, a_m^*)\) and \(a^* \in C_1 \times \cdots \times C_m\). For a fixed index \(i\), the subgradient of \(\|a_i - a_{i+1}\|\) with respect to \(a_i\) is, by \ref{sumofsubgradient}, \ref{remark:subgradient_equality}, and \ref{ex:subdiff_distance}, given by

	\begin{align}
		\partial D(a^*) 
		&= \partial\left( \sum_{i=1}^{m} \lVert a_i - a_{i+1} \rVert \right) \nonumber\\
		&= \partial\left(
		\lVert a_1 - a_2 \rVert
		+ \lVert a_2 - a_3 \rVert
		+ \cdots
		+ \lVert a_m - a_1 \rVert
		\right) \nonumber\\
		&= \partial\left( \lVert a_1 - a_2 \rVert \right)
		+ \partial\left( \lVert a_2 - a_3 \rVert \right)
		+ \cdots
		+ \partial\left( \lVert a_m - a_1 \rVert \right) \nonumber\\
		&= \Biggl(
		\tfrac{a_1^* - a_2^*}{\lVert a_1^* - a_2^* \rVert},
		\tfrac{a_2^* - a_1^*}{\lVert a_2^* - a_1^* \rVert},
		0, \dots, 0
		\Biggr) \nonumber\\
		&\quad + \Biggl(
		0,
		\tfrac{a_2^* - a_3^*}{\lVert a_2^* - a_3^* \rVert},
		\tfrac{a_3^* - a_2^*}{\lVert a_3^* - a_2^* \rVert},
		0, \dots, 0
		\Biggr) \nonumber\\
		&\quad + \cdots \nonumber\\
		&\quad + \Biggl(
		\tfrac{a_1^* - a_m^*}{\lVert a_1^* - a_m^* \rVert},
		0, \dots, 0,
		\tfrac{a_m^* - a_1^*}{\lVert a_m^* - a_1^* \rVert}
		\Biggr).
		\label{eq:partial_subgradient_long}
	\end{align}

	Returning to \eqref{eq:opt_cond1}, we obtain that there exist vectors $n_i\in N_{C_i}(a_i^*)$, for $i=1,\dots,m$, such that
	\begin{align*}
		&\Biggl(
		\frac{a_1^* - a_m^*}{\|a_1^* - a_m^*\|}
		+ \frac{a_1^* - a_2^*}{\|a_1^* - a_2^*\|},\;
		\frac{a_2^* - a_1^*}{\|a_2^* - a_1^*\|}
		+ \frac{a_2^* - a_3^*}{\|a_2^* - a_3^*\|},\; \dots, \\
		&\qquad \frac{a_m^* - a_{m-1}^*}{\|a_m^* - a_{m-1}^*\|}
		+ \frac{a_m^* - a_1^*}{\|a_m^* - a_1^*\|}
		\Biggr)
		+ (n_1, n_2, \dots, n_m) = 0.
	\end{align*}

	This vector equation is equivalent to the system of equations
	\[
	\frac{a_i^*-a_{i-1}^*}{\|a_i^*-a_{i-1}^*\|}+\frac{a_i^*-a_{i+1}^*}{\|a_i^*-a_{i+1}^*\|}+ n_i =0,\quad i=1,\dots,m,
	\]
	where we recall the cyclic convention $a_0^*=a_m^*$ and $a_{m+1}^*=a_1^*$, which is exactly the statement in \eqref{optimality_vector}. Summing these equations over $i$ immediately gives
	$$ n_1+n_2+\dots+n_m=0. $$
	This completes the proof.
\end{proof}
\subsection{Geometrical interpretation of optimality conditions}
The equation \eqref{optimality_vector} reveals that the sum of unit vectors along $\overrightarrow{a_2a_1}$  and $\overrightarrow{a_ma_1}$ is equal to a normal vector to $C_1$ at $a_1$. This is equivalent to the statement that a normal vector $n_1$ to $C_1$ at $a_1$ is the bisector of the angle $\angle a_2a_1a_m.$\\

More interestingly, consider equation~\eqref{optimality_vector}. If $a_i^* \in \operatorname{int}(C_i)$, then the normal cone satisfies $N_{C_i}(a_i^*) = \{0\}$, which implies that $n_i = 0$. In this case, equation~\eqref{optimality_vector} simplifies to
\begin{equation}
	\frac{a_i^* - a_{i-1}^*}{\|a_i^* - a_{i-1}^*\|} = - \frac{a_i^* - a_{i+1}^*}{\|a_i^* - a_{i+1}^*\|}
\end{equation}
This relation indicates that the vectors $\overrightarrow{a_{i-1}a_i}$ and $\overrightarrow{a_{i+1}a_i}$ are collinear and point in opposite directions. In other words, the points $a_{i-1}$, $a_i$, and $a_{i+1}$ must lie on a straight line. However, when the sets $C_i$ are in general position, such collinearity is generally not possible. Therefore, it is noteworthy that under the general position assumption, equation~\eqref{optimality_vector} implies that each optimal point $a_i^*$ must lie on the boundary of its corresponding set $C_i$.

The equation \eqref{normal_vector} verbally says that the normal vectors, which are the angle bisectors of the minimal chain (the closed polygonal chain having the minimal perimeter), sum to zero. It indicates towards a point $x \in \mathbb{R}^n$ such that all the angle bisectors of the minimal chain meet at $x$. Moreover, the equation \eqref{optimality_vector}  and \eqref{normal_vector} suggest that the point $x$ must be in the relative interior of the convex hull of the closed polygonal chain (convex polytope) $(a_1, a_2, \cdots, a_m)$.
In other words, when $C_i$'s are in general position, then there is a point $x$ (unique if atleast one $C_i$ is strictly convex), which is the intersection of all the bisectors of the closed polygonal chain which has the vertices as the euclidean projection of $x$ on the given convex sets, in this case the normal vector  $n_i= x-\operatorname{proj}_{C_i}(x) \in N_{C_i}(\operatorname{proj}_{C_i}(x))$.

\section{Convergence Analysis of a Numerical Algorithm}\label{Convergence}
In this section, based on the optimality conditions, we frame the computational method and rigorously examine the conditions under which the algorithm converges to an optimal solution.

\begin{theorem}[\textbf{Convergence of Projected Subgradient Descent for the generalized waist problem}]
	Let $C_1, C_2, \dots, C_m \subseteq\mathbb{R}^n$ be non-empty, closed, and convex sets, which are pairwise disjoint. Assume that boundedness condition and growth conditions of \ref{Existenceofsolution} are satisfied. 
	Consider the sequence \( \{(a_1^{(k)},\allowbreak\dots,\allowbreak a_m^{(k)})\} \)
	generated by the \textit{Projected Subgradient Descent Algorithm}, defined as:

	\begin{align}
		a_i^{(k+1)}
		= \operatorname{proj}_{C_i} \Bigl( a_i^{(k)}
		- \alpha_k \nabla_{a_i} D^{(k)} \Bigr),
		\quad i = 1, \dots, m.
		\label{eq:update_rules}
	\end{align}

	where \( \operatorname{proj}_{C_i}(x) \) denotes the projection operator onto the convex set \( C_i \), and \( \nabla_{a_i} D^{(k)} \) are the subgradients of the objective function \( D \)  with respect to $a_i$ at iteration \( k \). The step size sequence \( \{\alpha_k\} \) satisfies following conditions:
	
	\begin{equation} \label{stepsizecondition}
		\sum_{k=1}^\infty \alpha_k = \infty \quad \text{and} \quad \sum_{k=1}^\infty \alpha_k^2 < \infty.
	\end{equation}

	Then, the following results hold:
	\begin{enumerate}
		\item The sequence \((a_1^{(k)}\allowbreak, a_2^{(k)}\allowbreak, \dots\allowbreak, a_m^{(k)})\) converges to an optimal solution \((a_1^*\allowbreak, a_2^*\allowbreak, \dots\allowbreak, a_m^*)\) of the optimization problem~\eqref{objective}.
		\item The sequence of objective values \(D^{(k)} = D(a_1^{(k)}\allowbreak, a_2^{(k)}\allowbreak, \dots\allowbreak, a_m^{(k)})\) converges to the global minimum \(D^*\).
	\end{enumerate}
	
\end{theorem}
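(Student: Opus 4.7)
The plan is to carry out the classical projected subgradient descent analysis, exploiting the particular structure of $D$. Let $a^{*}=(a_1^{*},\dots,a_m^{*})\in C$ be an optimal solution, whose existence is guaranteed by Theorem~\ref{Existenceofsolution}, and write $D^{*}:=D(a^{*})$. Two preliminary observations anchor the argument. First, feasibility of every iterate is automatic, since $C=C_1\times\cdots\times C_m$ has product structure and so projection onto $C$ decomposes blockwise into the updates~\eqref{eq:update_rules}; in particular $a^{(k)}\in C$ for all $k$. Second, the chosen subgradient $g^{(k)}\in\partial D(a^{(k)})$ is well-defined and uniformly bounded: the pairwise-disjointness hypothesis together with $a_i^{(k)}\in C_i$ forces $a_i^{(k)}\neq a_{i+1}^{(k)}$ for every $i$, so by Example~\ref{ex:subdiff_distance} and Theorem~\ref{sumofsubgradient} each block of $g^{(k)}$ is the sum of two unit vectors of the form $\pm(a_i^{(k)}-a_{i\pm 1}^{(k)})/\|a_i^{(k)}-a_{i\pm 1}^{(k)}\|$. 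This yields the block bound $\|(g^{(k)})_i\|\le 2$ and hence the global bound $\|g^{(k)}\|\le G:=2\sqrt{m}$, independently of $k$.

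Next I would derive the master recursion. Using the non-expansiveness of $\operatorname{proj}_C$ together with $\operatorname{proj}_C(a^{*})=a^{*}$,
\[
\|a^{(k+1)}-a^{*}\|^{2} \;\le\; \|a^{(k)}-\alpha_k g^{(k)}-a^{*}\|^{2} \;=\; \|a^{(k)}-a^{*}\|^{2}-2\alpha_k\langle g^{(k)},\,a^{(k)}-a^{*}\rangle+\alpha_k^{2}\|g^{(k)}\|^{2}.
\]
Combining the subgradient inequality $\langle g^{(k)},\,a^{(k)}-a^{*}\rangle \ge D(a^{(k)})-D^{*}$ with $\|g^{(k)}\|\le G$ yields
\[
\|a^{(k+1)}-a^{*}\|^{2} \;\le\; \|a^{(k)}-a^{*}\|^{2}-2\alpha_k\bigl(D(a^{(k)})-D^{*}\bigr)+\alpha_k^{2}G^{2}.
\]
Summing this inequality from $k=1$ to $N$, dropping the non-negative left-hand side, letting $N\to\infty$, and invoking $\sum\alpha_k^{2}<\infty$ gives $\sum_{k=1}^{\infty}\alpha_k\bigl(D(a^{(k)})-D^{*}\bigr)<\infty$. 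Since $D(a^{(k)})\ge D^{*}$ and $\sum\alpha_k=\infty$, a standard argument (if the $\liminf$ were positive, the series would diverge) forces $\liminf_{k}D(a^{(k)})=D^{*}$.

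To promote this to full convergence, I would apply the Robbins--Siegmund lemma to the master recursion: since $\sum\alpha_k^{2}G^{2}<\infty$ and $2\alpha_k(D(a^{(k)})-D^{*})\ge 0$, the non-negative sequence $\{\|a^{(k)}-a^{*}\|^{2}\}$ converges. In particular $\{a^{(k)}\}$ is bounded in $\mathbb{R}^{mn}$, so there is a subsequence $a^{(k_j)}\to\bar a\in C$; along this subsequence we may further choose $D(a^{(k_j)})\to\liminf_k D(a^{(k)})=D^{*}$, and by continuity of $D$ we get $D(\bar a)=D^{*}$, so $\bar a$ is an optimal solution. Replaying the master recursion with $\bar a$ in place of $a^{*}$ shows that $\|a^{(k)}-\bar a\|^{2}$ likewise converges; since its subsequence $\|a^{(k_j)}-\bar a\|^{2}\to 0$, the whole sequence does, giving $a^{(k)}\to\bar a$. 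Continuity of $D$ then yields $D(a^{(k)})\to D^{*}$, establishing both conclusions.

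The step I expect to demand the most care is the uniform boundedness of the subgradients $g^{(k)}$. In a generic subgradient method such a bound requires Lipschitz continuity on bounded sets together with a separate a-priori boundedness of the iterates. Here the pairwise-disjointness hypothesis $C_i\cap C_j=\varnothing$ is doing substantive work: it keeps every denominator $\|a_i^{(k)}-a_{i+1}^{(k)}\|$ bounded away from zero, so the sum-of-norms structure of $D$ yields the explicit constant $G=2\sqrt{m}$ without any further boundedness assumption on the iterates themselves. Without this hypothesis, $\|g^{(k)}\|$ could blow up as iterates approach configurations with coincident vertices, and the telescoping estimate that drives the whole convergence proof would collapse.
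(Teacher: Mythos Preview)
Your proof is correct and carries out in full the classical projected--subgradient analysis (non-expansiveness of the projection, the master recursion, the Robbins--Siegmund/almost-supermartingale lemma, and the cluster-point argument). The paper, by contrast, does not reproduce any of this: its proof is essentially a one-paragraph appeal to the standard ``square summable but not summable'' convergence theorem in \cite[Proposition~8.2.6]{BeNeOz03}, after remarking that the problem is convex, that an optimal solution exists by Theorem~\ref{Existenceofsolution}, and that the subgradients are well defined via Theorem~\ref{sumofsubgradient} and Proposition~\ref{subgradientformula}. What you gain by writing things out is twofold: the argument becomes self-contained, and you make explicit the role of the pairwise-disjointness hypothesis---namely, that it gives the uniform subgradient bound $\|g^{(k)}\|\le 2\sqrt{m}$ with no separate a-priori boundedness of the iterates, something the paper's proof leaves implicit inside the cited black box. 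What the paper's route buys is brevity and a clean separation of concerns: all the analytic work is delegated to a standard reference.
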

\begin{proof}
	The algorithm described in \eqref{eq:update_rules} is well-posed because the projection onto a convex set is uniquely defined. Specifically, the projection operator \( \operatorname{proj}_{C_i}(x) \) ensures that the updated points \( (a_1^{(k+1)}, a_2^{(k+1)}, \dots,a_m^{(k+1)}) \) remain within their respective convex sets \( C_1, C_2, \dots,C_m \). 
	Since the sets \( C_1, C_2, \dots,C_m \) are closed and convex, the optimization problem admits an optimal solution, as guaranteed by~\ref{Existenceofsolution}. The convergence of the algorithm under the step size conditions \eqref{stepsizecondition} specified in algorithm  \eqref{eq:update_rules} is based on the theory of the subgradient method for convex functions. This is specifically the ``square summable but not summable" case, as established in \cite[Page 480, Proposition 8.2.6]{BeNeOz03}.
	Additionally, the subgradient sum rule theorem, as stated in~\ref{sumofsubgradient}, and the subgradient formula for the distance function, given in~\ref{subgradientformula}, are critical to the analysis. These results ensure that the subgradients computed at each iteration are valid and that the sequence of updates converges to a stationary point. Since \( D(a_1, a_2, \dots,a_m) \) is convex, this stationary point corresponds to the global minimum of the objective function.
	Thus, the sequence \(\{a_1^{(k)}\allowbreak, a_2^{(k)}\allowbreak, \dots\allowbreak, a_m^{(k)}\}\) converges to the optimal solution \((a_1^*\allowbreak, a_2^*\allowbreak, \cdots\allowbreak, a_m^*)\), and the corresponding objective values \(D^{(k)}\) converge to the global minimum \(D^*\).	
\end{proof}

It is necessary to emphasize that the efficiency and convergence heavily rely on the step size. The following gives a nice charaterization to choose a step size for the proposed algorithm. 

\begin{theorem}[\textbf{Selection of optimal step size using exact line search method}]\label{exactlinesearchtheorem}
	Let $f: \mathbb{R}^n \to \mathbb{R}$ be defined as $f(a_1) = \|a_1 - a_2^*\| + \|a_1 - a_3^*\|$, where $a_2^*, a_3^* \in \mathbb{R}^n$ are fixed points. Let $C \subseteq \mathbb{R}^n$ be a nonempty closed convex set. Consider the constraint  optimization problem:
	$$
	\min_{a_1 \in C} f(a_1).
	$$
	Given an iterate $a_1^{(k)} \in C$, let $d^{(k)} \in \partial f(a_1^{(k)})$ be a subgradient of $f$ at $a_1^{(k)}$. Define the descent direction as $p^{(k)} = -d^{(k)}$. The next iterate $a_1^{(k+1)}$ is obtained by:
	$$
	a_1^{(k+1)} = a_1^{(k)} + \alpha p^{(k)},
	$$
	where the optimal step size $\alpha$ is chosen to minimize the objective function along the direction $p^{(k)}$. To compute $\alpha$, let
	$$
	\phi(\alpha) = f(a_1^{(k)} + \alpha p^{(k)}) =  \|a_1^{(k)} + \alpha p^{(k)} - a_2^*\| + \|a_1^{(k}) + \alpha p^{(k)} - a_3^*\|.
	$$
	Define 
	$$
	t_1(\alpha) = a_1^{(k)} + \alpha p^{(k)} - a_2^*, \quad t_2(\alpha) = a_1^{(k)} + \alpha p^{(k)} - a_3^*.
	$$
	Then,
	the optimal step size $\alpha$ is a solution to the equation:
	$$
	\phi'(\alpha) = \frac{t_1(\alpha)^\top}{\|t_1(\alpha)\|} p^{(k)} + \frac{t_2(\alpha)^\top}{\|t_2(\alpha)\|} p^{(k)} = 0.
	$$
	(The above equation can be solved numerically to obtain $\alpha$.)
\end{theorem}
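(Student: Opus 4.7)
The plan is to establish that, along the ray \(\{a_1^{(k)} + \alpha p^{(k)} : \alpha \in \mathbb{R}\}\), the one-dimensional function \(\phi\) is convex, so that its minimizer is characterized by the first-order stationary condition \(\phi'(\alpha) = 0\). The displayed expression for \(\phi'\) then follows by a direct chain-rule computation on each of the two norm terms. Since the theorem only addresses how to pick the step size along the descent direction (the subsequent projection onto \(C\) is handled by the outer algorithm in~\ref{eq:update_rules}), this reduces to an unconstrained scalar line-search problem.

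First, I would argue convexity of \(\phi\). The function \(f(a_1) = \|a_1 - a_2^*\| + \|a_1 - a_3^*\|\) is a sum of Euclidean norms composed with affine maps, hence convex on \(\mathbb{R}^n\). The restriction of a convex function to an affine line is convex, so \(\phi(\alpha) = f(a_1^{(k)} + \alpha p^{(k)})\) is convex in the scalar \(\alpha\). In particular, every stationary point of \(\phi\) is a global minimizer along the search direction, which is precisely what exact line search requires.

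Next I would compute the derivative. For any affine curve \(t(\alpha) = a_1^{(k)} + \alpha p^{(k)} - b\) with \(b\) fixed, the map \(\alpha \mapsto \|t(\alpha)\|\) is differentiable at every \(\alpha\) with \(t(\alpha) \neq 0\), and by the chain rule applied to \(\sqrt{\langle t(\alpha), t(\alpha)\rangle}\),
\[
\frac{d}{d\alpha}\|t(\alpha)\| \;=\; \left\langle \frac{t(\alpha)}{\|t(\alpha)\|},\, t'(\alpha) \right\rangle \;=\; \frac{t(\alpha)^\top p^{(k)}}{\|t(\alpha)\|}.
\]
Applying this identity with \(b = a_2^*\) and \(b = a_3^*\) and summing yields exactly the stated formula for \(\phi'(\alpha)\). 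Combined with convexity of \(\phi\), the equation \(\phi'(\alpha) = 0\) is both necessary and sufficient for \(\alpha\) to be an optimal step size.

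The main obstacle is handling points where \(\phi\) fails to be differentiable, which occur precisely when \(t_1(\alpha) = 0\) or \(t_2(\alpha) = 0\), i.e.\ when the search line passes through \(a_2^*\) or \(a_3^*\). At such points the pointwise derivative must be replaced by the subdifferential inclusion \(0 \in \partial \phi(\alpha)\), computed via~\ref{subgradientformula} and the chain rule \ref{affinetheorem}. However, this degeneracy occurs on at most a two-point set of parameter values, and everywhere else the stated identity is valid; moreover, since \(\phi\) is convex, \(\phi'\) is monotone nondecreasing wherever defined, so the root of \(\phi'(\alpha) = 0\) can be located numerically by a one-dimensional bisection or secant procedure, exactly as the theorem prescribes.
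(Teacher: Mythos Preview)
Your proposal is correct and follows essentially the same route as the paper: define the one-dimensional line-search function \(\phi\), differentiate each norm term via the chain rule to obtain the displayed expression for \(\phi'(\alpha)\), and set it to zero. Your argument is in fact slightly more complete, since you explicitly justify convexity of \(\phi\) (so that stationarity is sufficient for optimality) and flag the non-differentiable case \(t_i(\alpha)=0\), neither of which the paper addresses.
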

\begin{proof}
	The optimization problem is to find an $a_1 \in C$ that minimizes the function $f(a_1) = \|a_1 - a_2^*\| + \|a_1 - a_3^*\|$, where $a_2^*, a_3^* \in \mathbb{R}^n$ are fixed. We aim to find a method to iteratively minimize the function.
	
	At iteration $k$, given the current iteration $a_1^{(k)}$, we take a step along the descent direction $p^{(k)}=-d^{(k)}$ where $d^{(k)}$ is a subgradient of $f$ at $a_1^{(k)}$.
	The corresponding iteration scheme is given by:
	$$a_1^{(k+1)} = a_1^{(k)} + \alpha p^{(k)}.	$$	
	The optimal step size $\alpha$ is determined by minimizing $f$ along the direction $p^{(k)}$. Hence,
	$$	\alpha = \arg\min_{\alpha \geq 0} \phi(\alpha),	$$
	where 
	$$
	\phi(\alpha) = f(a_1^{(k)} + \alpha p^{(k)}) = \|a_1^{(k)} + \alpha p^{(k)} - a_2^*\| + \|a_1^{(k)} + \alpha p^{(k)} - a_3^*\|.
	$$
	We seek $\alpha$ that makes the derivative of $\phi$ equal to 0.
	Let $t_1(\alpha) = a_1^{(k)} + \alpha p^{(k)} - a_2^*$ and $t_2(\alpha) = a_1^{(k)} + \alpha p^{(k)} - a_3^*$. Then, we can rewrite $\phi(\alpha)$ as 
	$$
	\phi(\alpha) = \|t_1(\alpha)\| + \|t_2(\alpha)\|.
	$$
	Taking the derivative with respect to $\alpha$, we get:
	$$
	\phi'(\alpha) = \frac{t_1(\alpha)^\top}{\|t_1(\alpha)\|} \frac{d}{d\alpha} t_1(\alpha) + \frac{t_2(\alpha)^\top}{\|t_2(\alpha)\|} \frac{d}{d\alpha} t_2(\alpha).
	$$
	Since $t_1(\alpha) = a_1^{(k)} + \alpha p^{(k)} - a_2^*$ and $t_2(\alpha) = a_1^{(k)} + \alpha p^{(k)} - a_3^*$, we have $\frac{d}{d\alpha} t_1(\alpha) = p^{(k)}$ and $\frac{d}{d\alpha} t_2(\alpha) = p^{(k)}$. Therefore,
	$$
	\phi'(\alpha) = \frac{t_1(\alpha)^\top}{\|t_1(\alpha)\|} p^{(k)} + \frac{t_2(\alpha)^\top}{\|t_2(\alpha)\|} p^{(k)}.
	$$
	The optimal step size $\alpha$ is found by solving equation $\phi'(\alpha) = 0$, or equivalently:
	$$
	\frac{t_1(\alpha)^\top}{\|t_1(\alpha)\|} p^{(k)} + \frac{t_2(\alpha)^\top}{\|t_2(\alpha)\|} p^{(k)} = 0.
	$$	
\end{proof}

Due to the complexity of the derivative, finding $\alpha$ analytically can be difficult. Instead, $\alpha$ can be approximated using numerical methods.

Understanding how rapidly an algorithm approaches its solution is essential for evaluating its efficiency. Convergence rates are typically classified as linear, sublinear, or superlinear and they  correspond respectively to geometric, polynomial, or faster-than-geometric decay of the error. We now recall these definitions from \cite{OR00}.

\begin{definition}[Linear Convergence]
	\label{def:linear_convergence}
	Let $\{a_k\}$ be a sequence converging to $a$, and define the error $e_k = \|a_k - a\|$.  We say $\{a_k\}$ converges \emph{linearly} to $a$ if there exist constants $c\in(0,1)$ and $k_0\in\mathbb{N}$ such that
	\[
	\|a_{k+1}-a\|\;\le\;c\,\|a_k-a\|
	\quad\forall\,k\ge k_0.
	\]
	Equivalently,
	\[
	\lim_{k\to\infty}\frac{e_{k+1}}{e_k} = r,\quad 0<r<1.
	\]
\end{definition}

\begin{definition}[Sublinear Convergence]
	\label{def:sublinear_convergence}
	A sequence $\{a_k\}$ converging to $a$ has \emph{sublinear convergence} if its error $e_k=\|a_k-a\|$ decays more slowly than any geometric rate.  A common characterization is
	\[
	e_k = \mathcal{O}\left(\frac{1}{k}\right),
	\quad\text{or equivalently}\quad
	\lim_{k \to \infty}\frac{e_{k+1}}{e_k}=1.
	\]
\end{definition}

\begin{definition}[Superlinear Convergence]
	\label{def:superlinear_convergence}
	A sequence $\{a_k\}$ converges \emph{superlinearly} to $a$ if
	\[
	\lim_{k\to\infty}\frac{e_{k+1}}{e_k} = 0.
	\]
\end{definition}

\subsection{Accelerating Convergence}
\label{Accelerating_Convergence}

The projected subgradient method theoretically converges at a sublinear rate, which can be slow in practice. To accelerate this behavior, we apply Aitken’s $\Delta^2$ transform to the iterates. In this subsection, we show that this classical sequence‐transformation systematically boosts convergence.

We illustrate our approach to solve the generalized Waist problem  \eqref{objective}, using the projected subgradient method~ \ref{alg:psd_algorithm}. The  standard analysis establishes a sublinear rate \cite[Theorem 9.16]{Be14}, namely under diminishing step sizes, although these bounds guarantee convergence, they can be slow when high accuracy is required.  Empirically, however, the iterates often exhibit locally linear behavior:
\[
\frac{\|a_{k+1}-a\|}{\|a_k-a\|}\;\to\;\mu\in(0,1),
\quad k\to\infty.
\]
To accelerate convergence, we employ Aitken’s $\Delta^2$ transformation defined by:
\[
\tilde{a}_k
\;=\;
a_k
\;-\;
\frac{(a_{k+1}-a_k)^2}
{\,a_{k+2}-2a_{k+1}+a_k\,}.
\]
The resulting transformed sequence of iterates, $\tilde{a}_k$, converges superlinearly to the limit $a$. To establish this result, we leverage the sublinear convergence property of the original sequence $a^{(k)}$.

The proof of following theorem can be found in any standard textbook of numerical analysis, e.g. see \cite{Po17}. To make the paper self contained we are representing it again.
\begin{theorem}[Aitken’s \(\Delta^2\) Transform Accelerates Convergence]\label{thm:aitken}
	Let \(\{a_k\}\) be a sequence converging to \(a\). Suppose that, for sufficiently large \(k\), the ratio of consecutive errors approaches a constant \(\mu \in (0,1)\), namely
	\[
	\lim_{k \to \infty} \frac{a_{k+1} - a}{a_k - a} = \mu.
	\]
	This condition implies that \(\{a_k\}\) converges approximately linearly to \(a\). Define the \emph{Aitken-\(\Delta^2\) transform} of \(\{a_k\}\) by
	\[
	\tilde{a}_k = a_k - \frac{(a_{k+1} - a_k)^2}{a_{k+2} - 2a_{k+1} + a_k}.
	\]
	Under the assumption that the ratio above is strictly constant, the transformed sequence \(\{\tilde{a}_k\}\) converges \emph{superlinearly} to \(a\). More precisely,
	\[
	\lim_{k \to \infty} \frac{\tilde{a}_{k} - a}{a_{k+2} - a} = 0.
	\]
\end{theorem}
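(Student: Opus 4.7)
The plan is to parametrize the error and substitute into the definition of $\tilde{a}_k$ so that an algebraic cancellation exposes the improved rate. I would first introduce the shorthand $e_k := a_k - a$ and the ratio sequence $\mu_k := e_{k+1}/e_k$, noting that the hypothesis forces $\mu_k \to \mu \in (0,1)$ and, in particular, $e_k \neq 0$ for all sufficiently large $k$ (otherwise the sequence is already at the limit and the conclusion is vacuous). With this notation, the first and second consecutive differences rewrite as
\[
a_{k+1}-a_k = (\mu_k-1)\,e_k,
\qquad
a_{k+2}-2a_{k+1}+a_k = (\mu_{k+1}\mu_k - 2\mu_k + 1)\,e_k.
\]

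Next I would substitute these expressions into the definition of the $\Delta^2$ transform to obtain
\[
\tilde{a}_k - a = e_k\cdot\frac{(\mu_{k+1}\mu_k-2\mu_k+1)-(\mu_k-1)^2}{\mu_{k+1}\mu_k-2\mu_k+1}.
\]
The crucial algebraic step is the collapse of the numerator,
\[
(\mu_{k+1}\mu_k-2\mu_k+1)-(\mu_k^2-2\mu_k+1)=\mu_k\,(\mu_{k+1}-\mu_k),
\]
which yields the compact expression
\[
\tilde{a}_k - a = e_k\cdot\frac{\mu_k\,(\mu_{k+1}-\mu_k)}{\mu_{k+1}\mu_k-2\mu_k+1}.
\]

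Finally, observing that $a_{k+2}-a = \mu_{k+1}\mu_k\,e_k$, I would form the ratio appearing in the conclusion:
\[
\frac{\tilde{a}_k-a}{a_{k+2}-a} = \frac{\mu_{k+1}-\mu_k}{\mu_{k+1}\,(\mu_{k+1}\mu_k - 2\mu_k + 1)}.
\]
Passing to the limit, the numerator tends to $0$ since $\mu_k,\mu_{k+1}\to\mu$, while the denominator tends to $\mu\,(\mu-1)^2$, which is strictly nonzero because $\mu\in(0,1)$. Hence the ratio tends to $0$, establishing superlinear convergence as claimed.

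The main obstacle is not algebraic but rests on justifying the manipulations: one must verify that both $e_k$ and $\mu_{k+1}\mu_k-2\mu_k+1$ are eventually nonzero so that the quotients are well defined and the limit can be taken term by term. The former fails only in the degenerate case when the sequence is already stationary at $a$, in which case the conclusion is trivial, while the latter is guaranteed by continuity together with the hypothesis $\mu\neq 1$, since the limit of that denominator is $(\mu-1)^2 > 0$.
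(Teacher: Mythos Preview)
Your proof is correct and follows essentially the same approach as the paper: both rewrite $\tilde{a}_k - a$ in terms of the error ratios $e_{k+1}/e_k$, divide by $a_{k+2}-a$, and pass to the limit using $\mu\in(0,1)$ so that the denominator $(\mu-1)^2$ stays bounded away from zero. Your version is in fact slightly more explicit, since you carry the running ratios $\mu_k$ throughout and isolate the factor $\mu_{k+1}-\mu_k\to 0$ in the numerator, whereas the paper substitutes the limit $\mu$ directly into the expression $1-\dfrac{(\mu-1)^2}{\mu^2-2\mu+1}$; both arrive at the same conclusion.
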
%
\begin{proof}
	Consider the sequence \(\{a_k\}\) converging to \(a\). By hypothesis, there exists a constant \(\mu \in (0,1)\) such that
	\[
	\lim_{k \to \infty} \frac{a_{k+1} - a}{a_k - a} = \mu.
	\]
	In particular, this implies
	\[
	\lim_{k \to \infty} \frac{a_{k+2} - a}{a_k - a} = \mu^2.
	\]
	We define the Aitken-\(\Delta^2\) transform of \(\{a_k\}\) by
	\[
	\tilde{a}_k = a_k - \frac{(a_{k+1} - a_k)^2}{a_{k+2} - 2a_{k+1} + a_k}.
	\]
	To check its convergence, consider the ratio
	\[
	\frac{\tilde{a}_k - a}{a_{k+2} - a}
	= \frac{(a_k - a) - \dfrac{(a_{k+1} - a_k)^2}{a_{k+2} - 2a_{k+1} + a_k}}{a_{k+2} - a},
	\]
	which simplifies to
	\begin{equation}\label{eq:aitken_ratio}
		\frac{\tilde{a}_k - a}{a_{k+2} - a}
		= \frac{a_k - a}{a_{k+2} - a}
		\left(
		1 - \frac{\left( \tfrac{a_{k+1} - a}{a_k - a} - 1 \right)^2}{
			\tfrac{a_{k+2} - a}{a_k - a} - 2\tfrac{a_{k+1} - a}{a_k - a} + 1}
		\right).
	\end{equation}
	Taking the limit as \(k \to \infty\) and using the earlier limits, we obtain
	\[
	\lim_{k \to \infty} \frac{\tilde{a}_{k} - a}{a_{k+2} - a}
	= \frac{1}{\mu^2}
	\left(
	1 - \frac{(\mu - 1)^2}{\mu^2 - 2\mu + 1}
	\right)
	= 0.
	\]
\end{proof}%

\section{Numerical Illustration}\label{Numericalexample}

This section presents the numerical solution for the proposed \textit{generalized waist problem.} Solving the \textit{ generalized waist problem \eqref{objective}} requires efficient numerical methods. The projected subgradient algorithm is a natural choice for a class of problems involving distance minimizations due to its simplicity and its ability to handle convex constraints by projecting onto the sets \(C_1, C_2, \cdots C_m\). The algorithm and technique are presented and illustrated with examples in \(\mathbb{R}^2\) and \(\mathbb{R}^3\). Furthermore, we analyze the performance of the algorithm and compare its convergence with the Aitken's \(\Delta^2\) Acceleration, which is used to accelerate convergence.

To make the algorithm easier to understand and implement, we present the following algorithm. This provides a clear and straightforward outline of the main computational steps, offering an intuitive overview of the method.

\begin{algorithm}[H]
	\caption{Projected Subgradient Descent for the generalized waist problem}
	\label{alg:psd_algorithm}
	\begin{algorithmic}[1]
		\REQUIRE Convex sets $C_1,\dots,C_m$, initial points $a_i^{(0)}\in C_i$ for $i=1,\dots,m$, step-size sequence $\{\alpha_k\}$, tolerance $\epsilon > 0$
		\ENSURE Optimal tuple $(a_1^*,\dots,a_m^*)$ and minimum perimeter $D^*$
		
		\STATE $k \gets 0$, $D_{\mathrm{prev}} \gets +\infty$
		\REPEAT
		\FOR{$i = 1$ \TO $m$}
		\STATE Let $i^- = i - 1$ and $i^+ = i + 1$ (with cyclic indexing: $0 \mapsto m,\ m+1 \mapsto 1$)
		\STATE Compute subgradient
		\[
		g_i^{(k)} := \frac{a_i^{(k)} - a_{i^-}^{(k)}}{\|a_i^{(k)} - a_{i^-}^{(k)}\|}
		+ \frac{a_i^{(k)} - a_{i^+}^{(k)}}{\|a_i^{(k)} - a_{i^+}^{(k)}\|}
		\]
		\ENDFOR
		\FOR{$i = 1$ \TO $m$}
		\STATE Update
		\[
		a_i^{(k+1)} := \operatorname{proj}_{C_i}\left(a_i^{(k)} - \alpha_k g_i^{(k)}\right)
		\]
		\ENDFOR
		\STATE Compute objective
		\[
		D^{(k+1)} := \sum_{i=1}^m \|a_i^{(k+1)} - a_{i^+}^{(k+1)}\|
		\]
		\IF{$|D^{(k+1)} - D_{\mathrm{prev}}| < \epsilon$}
		\STATE \textbf{break}
		\ENDIF
		\STATE $D_{\mathrm{prev}} \gets D^{(k+1)}$, $k \gets k + 1$
		\UNTIL{convergence}
		
		\RETURN $(a_1^*, \dots, a_m^*) = (a_1^{(k+1)}, \dots, a_m^{(k+1)})$, $D^* = D^{(k+1)}$
	\end{algorithmic}
\end{algorithm}

\begin{example}\label{disc_example}
	We consider the \emph{generalized waist problem} for three convex discs in the Euclidean plane. Each disc is defined with its corresponding center and radius:
	
	\[
	\begin{aligned}
		C_1 &= \{(x,y) \in \mathbb{R}^2 \mid (x-2)^2 + (y-3)^2 \leq 1\}, \quad \text{center } (2,3), \text{ radius } =1; \\
		C_2 &= \{(x,y) \in \mathbb{R}^2 \mid (x-8)^2 + (y-4)^2 \leq 4\}, \quad \text{center } (8,4), \text{ radius } =2; \\
		C_3 &= \{(x,y) \in \mathbb{R}^2 \mid (x-4)^2 + (y-11)^2 \leq 9\}, \quad \text{center } (4,11), \text{ radius } =3.
	\end{aligned}
	\]
	
	The objective is to find one point from each disc, \(a_1 \in C_1\), \(a_2 \in C_2\), and \(a_3 \in C_3\), such that the total perimeter formed by connecting these points is minimized. In this case objective function is 
	$$	D(a_1, a_2, a_3):= \|a_1 - a_2\| + \|a_2 - a_3\| + \|a_3 - a_1\|.$$
\end{example}
To solve this optimization problem, the Projected Subgradient Descent (PSD) algorithm, as outlined in \ref{alg:psd_algorithm}, was employed. A MATLAB program was developed to implement the algorithm specifically for the given problem. We choose carefully  initial points within each disc: \(a_1^{(0)} = (1,3) \in C_1\), \(a_2^{(0)} = (10,4) \in C_2\), and \(a_3^{(0)} = (1,11) \in C_3\). The step size parameter \(\alpha_k\) was determined to be \textbf{$2.0707749$} through an exact line search, as described in \ref{exactlinesearchtheorem}.

The PSD algorithm successfully converges to an approximate optimal solution, yielding the following points on their respective discs:
$a_1^* = (2.7231,3.6907), \quad a_2^* = (6.1404,4.7362), \quad a_3^* = (4.2653,8.0118).$ These points minimize the total perimeter formed by connecting them, resulting in an optimal value of \(D^* = 11.9359\). This result highlights that the effectiveness of the PSD algorithm in solving the \textit{generalized waist problem} for the specified configuration of convex discs. \ref{fig:optimal_pointsdisc1} visually represents the discs \(C_1\), \(C_2\), and \(C_3\), 
the optimal points \((a_1^*,\allowbreak\ a_2^*,\allowbreak\ a_3^*)\), and the connecting line segments, 
as shown in \ref{fig:optimal_pointsdisc1}.  A detailed iteration-wise analysis is provided in \ref{tab:example1_iteration_data}, 
including the sequence of points \(a_1\allowbreak\), \(a_2\allowbreak\), and \(a_3\allowbreak\), 
the total distance \(D_k\allowbreak\), the change in total distance \(\Delta D_k\allowbreak\), and the convergence rate \(\xi\).
Together, these elements illustrate the development of the algorithm and its convergence to the optimal solution. Furthermore, to accelerate the convergence of the sequence of points, we implemented Aitken's acceleration as discussed in \ref{thm:aitken}. As shown in \ref{tab:example1_aitken_data}, Aitken's acceleration method reduces the number of iterations to 12 required for convergence, compared to 19 iterations  required for convergence in the unaccelerated PSD algorithm.

\begin{figure}[H]
	\centering
	\makebox[\textwidth]{\includegraphics[width=1.9\textwidth]{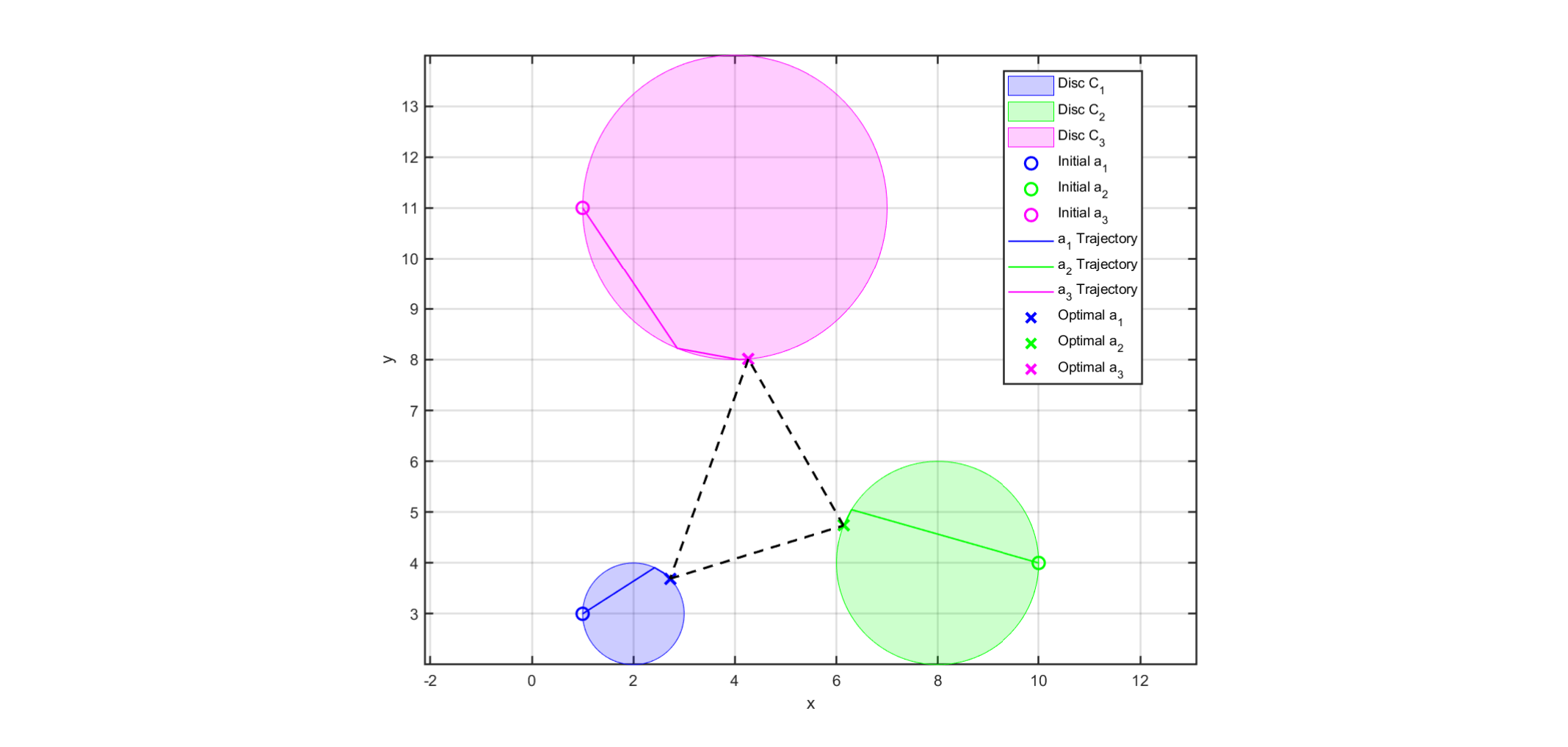}}
	\caption{Optimal Points $a_1^*$, $a_2^*$, and $a_3^*$ on Convex Discs with Minimized Sum of Distances.}
	\label{fig:optimal_pointsdisc1}
\end{figure}

\begin{table}[H]
	\centering
	\begin{sideways}
		\renewcommand{\arraystretch}{1.5}
		\begin{tabular}{|c|c|c|c|c|c|c|}
			\hline
			$k$ & $a_1 (x,y)$ & $a_2 (x,y)$ & $a_3 (x,y)$ & $D_k$ & $\Delta D_k$  & $\xi=\varepsilon_{k+1}/\varepsilon_{k}$ \\ 
			\hline
			1 & $(1.00000, 3.00000)$ & $(10.00000, 4.00000)$ & $(1.00000, 11.00000)$ & 28.45714 & - & - \\ 
			2 & $(2.41790, 3.90849)$ & $(6.29734, 5.04926)$ & $(2.86458, 8.22316)$ & 13.05662 & 15.400521476732889 & 1.00958 \\ 
			3 & $(2.59322, 3.80504)$ & $(6.11771, 4.67601)$ & $(4.08232, 8.00113)$ & 11.98162 & 1.075001604740480 & 0.99824 \\ 
			4 & $(2.71170, 3.70249)$ & $(6.13740, 4.72852)$ & $(4.20978, 8.00734)$ & 11.93761 & 0.044003727262275 & 0.99802 \\ 
			5 & $(2.72017, 3.69380)$ & $(6.13800, 4.73004)$ & $(4.25686, 8.01102)$ & 11.93601 & 0.001597725991488 & 0.99951 \\ 
			6 & $(2.72297, 3.69088)$ & $(6.13993, 4.73494)$ & $(4.26283, 8.01154)$ & 11.93595 & 0.000066006792645 & 0.99991 \\ 
			7 & $(2.72309, 3.69076)$ & $(6.14026, 4.73577)$ & $(4.26485, 8.01171)$ & 11.93595 & 0.000003384423957 & 0.99998 \\ 
			8 & $(2.72315, 3.69069)$ & $(6.14040, 4.73612)$ & $(4.26519, 8.01174)$ & 11.93595 & 0.000000202561294 & 0.99999 \\ 
			9 & $(2.72315, 3.69070)$ & $(6.14043, 4.73620)$ & $(4.26530, 8.01175)$ & 11.93595 & 0.000000013087915 & 0.99999 \\ 
			10 & $(2.72315, 3.69069)$ & $(6.14044, 4.73622)$ & $(4.26532, 8.01176)$ & 11.93595 & 0.000000000873264 & 0.99999  \\ 
			11 & $(2.72315, 3.69069)$ & $(6.14044, 4.73623)$ & $(4.26533, 8.01176)$ & 11.93595 & 0.000000000059005 & 0.99999 \\ 
			12 & $(2.72315, 3.69069)$ & $(6.14044, 4.73623)$ & $(4.26533, 8.01176)$ & 11.93595 & 0.000000000004009 & 0.99999 \\ 
			13 & $(2.72315, 3.69069)$ & $(6.14044, 4.73623)$ & $(4.26533, 8.01176)$ & 11.93595 & 0.000000000000270 & 0.99999  \\ 
			14 & $(2.72315, 3.69069)$ & $(6.14044, 4.73623)$ & $(4.26533, 8.01176)$ & 11.93595 & 0.000000000000020 & 0.99999 \\ 
			15 & $(2.72315, 3.69069)$ & $(6.14044, 4.73623)$ & $(4.26533, 8.01176)$ & 11.93595 & 0.000000000000004 & 0.99999  \\ 
			16 & $(2.72315, 3.69069)$ & $(6.14044, 4.73623)$ & $(4.26533, 8.01176)$ & 11.93595 & 0.000000000000002 & 0.99999  \\ 
			17 & $(2.72315, 3.69069)$ & $(6.14044, 4.73623)$ & $(4.26533, 8.01176)$ & 11.93595 & 0.000000000000002 & 0.99999  \\ 
			18 & $(2.72315, 3.69069)$ & $(6.14044, 4.73623)$ & $(4.26533, 8.01176)$ & 11.93595 & 0.000000000000002 & 0.99999  \\ 
			19 & $(2.72314, 3.69069)$ & $(6.14043, 4.73623)$ & $(4.26532, 8.01175)$ & 11.93595 & 0.000000000000000 & -  \\
			\hline
		\end{tabular}
	\end{sideways}
	\caption{Iteration-wise optimal points and distances for the generalized waist problem with three discs.}
	\label{tab:example1_iteration_data}
\end{table}

\begin{table}[H]
	\centering
	\begin{sideways}
		\renewcommand{\arraystretch}{1.5}
		\begin{tabular}{|c|c|c|c|c|c|c|}
			\hline
			$k$ & $a_1 (x,y)$ & $a_2 (x,y)$ & $a_3 (x,y)$ & $D_k$ & $\Delta D_k$  & $\xi=\varepsilon_{k+1}/\varepsilon_{k}$ \\ 
			\hline
			1 & $(2.417901, 3.908492)$ & $(6.297342, 5.049264)$ & $(2.864581, 8.223163)$ & 13.056618 & - &- \\ 
			2 & $(2.603894, 3.797065)$ & $(6.148965, 4.757410)$ & $(4.082319, 8.001130)$ & 11.981616 & -1.075001604740480 & 0.2007 \\  
			3 & $(2.706840, 3.707373)$ & $(6.139459, 4.733748)$ & $(4.246158, 8.010116)$ & 11.937125 & -0.044491800431819 & 0.1830 \\  
			4 & $(2.721648, 3.692260)$ & $(6.140468, 4.736302)$ & $(4.260762, 8.011354)$ & 11.935965 & -0.001159177048851 & 0.1182 \\  
			5 & $(2.722943, 3.690907)$ & $(6.140335, 4.735967)$ & $(4.264635, 8.011695)$ & 11.935946 & -0.000019629659851 & 0.1587 \\  
			6 & $(2.723145, 3.690696)$ & $(6.140406, 4.736145)$ & $(4.265266, 8.011751)$ & 11.935945 & -0.000000438958859 & 0.1825 \\  
			7 & $(2.723148, 3.690693)$ & $(6.140432, 4.736212)$ & $(4.265310, 8.011755)$ & 11.935945 & -0.000000014586059 & 0.1075 \\  
			8 & $(2.723147, 3.690695)$ & $(6.140439, 4.736228)$ & $(4.265330, 8.011756)$ & 11.935945 & -0.000000000348699 & 0.3371 \\  
			9 & $(2.723146, 3.690695)$ & $(6.140439, 4.736230)$ & $(4.265328, 8.011756)$ & 11.935945 & -0.000000000021878 & 0.0943 \\  
			10 & $(2.723146, 3.690695)$ & $(6.140439, 4.736230)$ & $(4.265328, 8.011756)$ & 11.935945 & -0.000000000000124 & 0.0902 \\  
			11 & $(2.723146, 3.690695)$ & $(6.140439, 4.736230)$ & $(4.265328, 8.011756)$ & 11.935945 & -0.000000000000004 & 0.3128 \\  
			12 & $(2.723146, 3.690695)$ & $(6.140439, 4.736230)$ & $(4.265328, 8.011756)$ & 11.935945 & 0.000000000000000 & 0.1061 \\ \hline
		\end{tabular}
		
	\end{sideways}
	\caption{Iteration-wise optimal points, distances, and convergence rate for the generalized waist problem with three discs, using Aitken acceleration}
	\label{tab:example1_aitken_data}
\end{table}

\newpage

\begin{example}
	We consider the generalized waist problem for three convex spheres in the Euclidean space \(\mathbb{R}^3\). Each sphere is defined by its center and radius:
	
	1. Sphere \(S_1\): Center \((2, 3, -1)\), Radius \(2\),
	\[
	S_1 = \{(x, y, z) \in \mathbb{R}^3 \mid (x - 2)^2 + (y - 3)^2 + (z + 1)^2 \leq 4\}.
	\]
	
	2. Sphere \(S_2\): Center \((4, -2, 1)\), Radius \(2\),
	\[
	S_2 = \{(x, y, z) \in \mathbb{R}^3 \mid (x - 4)^2 + (y + 2)^2 + (z - 1)^2 \leq 4\}.
	\]
	
	3. Sphere \(S_3\): Center \((6, 3, 2)\), Radius \(2\),
	\[
	S_3 = \{(x, y, z) \in \mathbb{R}^3 \mid (x - 6)^2 + (y - 3)^2 + (z - 2)^2 \leq 4\}.
	\]
	
	The objective is to find one point from each sphere, \(a_1 \in S_1\), \(a_2 \in S_2\), and \(a_3 \in S_3\), such that the total perimeter formed by connecting these points is minimized. The objective function is $D(a_1, a_2, a_3) = \|a_1 - a_2\| + \|a_2 - a_3\| + \|a_3 - a_1\|.$
\end{example}

To solve this optimization problem, we again employed the Projected Subgradient Descent (PSD) algorithm, as described in~\ref{alg:psd_algorithm}, implemented using MATLAB. The iterative process was initiated with starting points within each sphere: \(a_1^{(0)} = (3, 3, -1) \in S_1\), \(a_2^{(0)} = (5, -2, 1) \in S_2\), and \(a_3^{(0)} = (6, 4, 2) \in S_3\). Here, the step size parameter was set to a value of \(\alpha_k = 1.7432\) through an exact line search, as described in \ref{exactlinesearchtheorem}. The PSD algorithm effectively converged to an approximate optimal solution, yielding the points \(a_1^* = (2.8641, 2.9753, -0.6472)\), \(a_2^* = (4.7619, -1.9473, 1.4234)\), and \(a_3^* = (6.2548, 2.8465, 2.0186)\) on their respective spheres. As in the previous example, these points minimize the total perimeter, resulting in an optimal distance of \(D^* = 13.4567\). This outcome reinforces the effectiveness of the PSD algorithm in addressing the \textit{generalized waist problem}, even in three-dimensional space.

\ref{fig:optimal_points_spheres} provides a visual representation of the spheres \(S_1\), \(S_2\), and \(S_3\), the optimal points \((a_1^*, a_2^*, a_3^*)\), and the line segments connecting them. Correspondingly, \ref{tab:example2_iteration_data} provides a detailed analysis of the algorithm’s iterative performance, including the sequence of iterates, optimal points, the total distance, the change in total distance, the convergence rate , and the convergence rate \(\xi\). By presenting these results alongside the findings for the two dimensional case, we aim to highlight the versatility of the PSD algorithm across different geometric domains. Furthermore, to accelerate the convergence of the sequence of points, we implemented Aitken acceleration as discussed in~\ref{thm:aitken}. As shown in~\ref{tab:example2_Aitken_data}, Aitken's acceleration method reduces the number of iterations to 22 required for convergence, compared to 38 iterations required for convergence in the unaccelerated PSD algorithm.

\begin{figure}[H]
	\centering
	\includegraphics[width=0.9\textwidth]{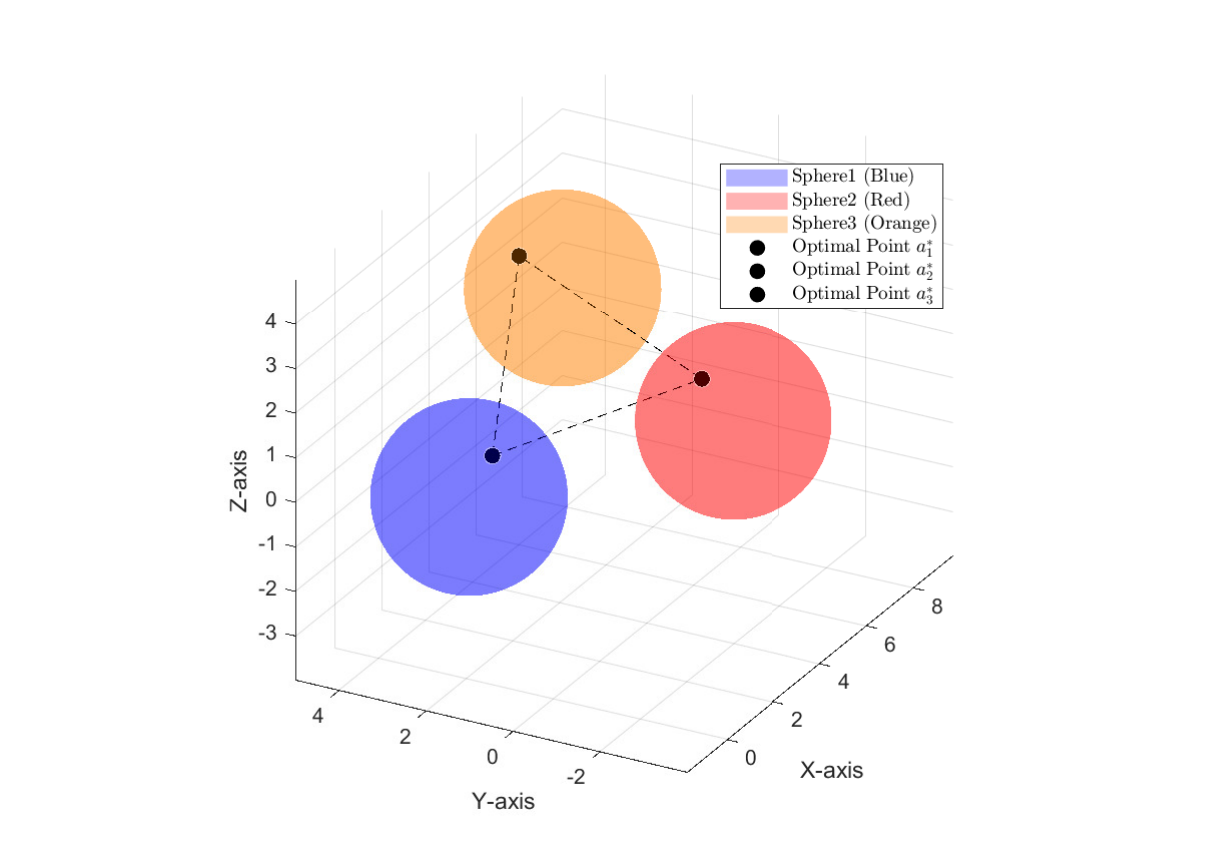}
	\caption{Optimal Points \(a_1^*\), \(a_2^*\), and \(a_3^*\) on Convex Spheres with Minimized Sum of Distances.}
	\label{fig:optimal_points_spheres}
\end{figure}

\begin{table}[H]
	\centering
	\small 
	\begin{sideways}
		\renewcommand{\arraystretch}{1.5} 
		\begin{tabular}{|c|c|c|c|c|c|c|}
			\hline
			$k$ & $(a_1)$ & $(a_2)$ & $(a_3)$ & $D_k$ & $\Delta D_k$ & $\xi=\varepsilon_{k+1}/\varepsilon_{k}$ \\ 
			\hline
			1 & $(3.78885, 3.00000, -0.10557)$ & $(4.81650, -0.36701, 1.81650)$ & $(6.00000, 4.78885, 2.89443)$ & 13.543440 & 13.543440379739803 & - \\ 
			2 & $(3.47830, 2.71033, 0.31557)$ & $(4.36223, -0.04351, 1.20230)$ & $(4.14272, 2.74603, 1.30286)$ & 7.015644 & 6.527796193268638 & 0.98067 \\ 
			3 & $(3.26667, 2.27893, 0.36953)$ & $(3.93336, -0.00225, 0.93248)$ & $(4.58982, 2.06012, 0.93791)$ & 6.063306 & 0.952338537549762 & 1.01597 \\ 
			4 & $(3.35594, 1.96113, 0.04028)$ & $(3.98949, -0.00692, 0.83410)$ & $(4.47105, 1.98434, 1.20580)$ & 5.909993 & 0.153312210017031 & 1.00823 \\ 
			5 & $(3.24817, 1.96343, 0.16944)$ & $(3.96211, -0.00864, 0.81823)$ & $(4.66319, 1.87515, 1.02653)$ & 5.872862 & 0.037131624226968 & 1.00092 \\ 
			6 & $(3.31831, 1.92285, 0.04966)$ & $(3.99666, -0.00974, 0.80285)$ & $(4.57938, 1.89120, 1.13259)$ & 5.860226 & 0.012636013077920 & 1.00029 \\ 
			7 & $(3.27864, 1.93568, 0.11008)$ & $(3.98527, -0.00982, 0.80263)$ & $(4.64410, 1.87190, 1.05716)$ & 5.855519 & 0.004706884093220 & 0.99998 \\ 
			8 & $(3.30730, 1.92655, 0.06709)$ & $(3.99601, -0.01034, 0.79693)$ & $(4.60858, 1.88036, 1.09981)$ & 5.853727 & 0.001792196061190 & 1.00000 \\ 
			
			15 & $(3.29799, 1.92960, 0.08142)$ & $(3.99401, -0.01037, 0.79672)$ & $(4.62465, 1.87681, 1.07975)$ & 5.852602 & 0.000002406174463 & 1.00000 \\ 
			
			20 & $(3.29845, 1.92952, 0.08080)$ & $(3.99412, -0.01038, 0.79661)$ & $(4.62409, 1.87690, 1.08048)$ & 5.852600 & 0.000000021932137 & 1.00000 \\ 
			
			25 & $(3.29840, 1.92953, 0.08086)$ & $(3.99411, -0.01038, 0.79662)$ & $(4.62414, 1.87689, 1.08041)$ & 5.852600 & 0.000000000200139 & 1.00000 \\ 
			
			30 & $(3.29841, 1.92953, 0.08085)$ & $(3.99411, -0.01038, 0.79662)$ & $(4.62414, 1.87690, 1.08041)$ & 5.852600 & 0.000000000001827 & 1.00000 \\ 
			
			35 & $(3.29841, 1.92953, 0.08085)$ & $(3.99411, -0.01038, 0.79662)$ & $(4.62414, 1.87690, 1.08041)$ & 5.852600 & 0.000000000000015 & 1.00000 \\ 
			36 & $(3.29841, 1.92953, 0.08085)$ & $(3.99411, -0.01038, 0.79662)$ & $(4.62414, 1.87690, 1.08041)$ & 5.852600 & 0.000000000000008 & 1.00000 \\ 
			37 & $(3.29841, 1.92953, 0.08085)$ & $(3.99411, -0.01038, 0.79662)$ & $(4.62414, 1.87690, 1.08041)$ & 5.852600 & 0.000000000000003 & 1.00000 \\ 
			38 & $(3.29841, 1.92953, 0.08085 )$ & $(3.99411, -0.01038, 0.79662 )$ & $(4.62414, 1.87690, 1.08041 )$ & 5.852600  & 0.000000000000000 & - \\ 
			\hline
		\end{tabular}
		
	\end{sideways}
	\caption{Iteration-wise optimal points and distances for the generalized waist problem with three sphere.}
	\label{tab:example2_iteration_data}
\end{table}

\begin{table}[H]
	\centering
	\small 
	\begin{sideways}
		\renewcommand{\arraystretch}{1.5} 
		\begin{tabular}{|c|c|c|c|c|c|c|}
			\hline
			$k$ & $(a_1)$ & $(a_2)$ & $(a_3)$ & $D_k$ & $\Delta D_k$ & $\xi=\varepsilon_{k+1}/\varepsilon_{k}$ \\ 
			\hline
			1  & $(3.46777, 2.47347, 0.25236)$ & $(4.25550, -0.01878, 1.09738)$ & $(4.36760, 2.23969, 1.12982)$ & 6.28685 & - & - \\ \hline
			
			2  & $(3.29829, 2.06772, 0.20220)$ & $(3.96781, -0.00475, 0.86613)$ & $(4.58245, 1.97471, 1.03081)$ & 5.89057 & 0.396280 & 0.2390 \\ \hline
			3  & $(3.31716, 1.95368, 0.08181)$ & $(3.98697, -0.00875, 0.81355)$ & $(4.58732, 1.91430, 1.09139)$ & 5.85913 & 0.031438 & 0.2645 \\ \hline
			4  & $(3.30023, 1.94364, 0.09248)$ & $(3.98872, -0.01072, 0.79349)$ & $(4.62505, 1.88217, 1.07265)$ & 5.85322 & 0.005908 & 0.3097 \\ \hline
			5  & $(3.30084, 1.92899, 0.07738)$ & $(3.99556, -0.01019, 0.79843)$ & $(4.62003, 1.88267, 1.07953)$ & 5.85270 & 0.000524 & 0.3978 \\ \hline
			
			10 & $(3.29847, 1.92953, 0.08078)$ & $(3.99413, -0.01038, 0.79657)$ & $(4.62416, 1.87687, 1.08041)$ & 5.85260 & 0.000000 & 0.5354 \\ \hline
			
			15 & $(3.29841, 1.92953, 0.08085)$ & $(3.99411, -0.01038, 0.79662)$ & $(4.62414, 1.87690, 1.08041)$ & 5.85260 & 0.000000 & 0.4313 \\ \hline
			16 & $(3.29841, 1.92953, 0.08085)$ & $(3.99411, -0.01038, 0.79662)$ & $(4.62414, 1.87690, 1.08041)$ & 5.85260 & 0.000000 & 0.4109 \\ \hline
			17 & $(3.29841, 1.92953, 0.08085)$ & $(3.99411, -0.01038, 0.79662)$ & $(4.62414, 1.87690, 1.08041)$ & 5.85260 & 0.000000 & 0.5222 \\ \hline
			18 & $(3.29841, 1.92953, 0.08085)$ & $(3.99411, -0.01038, 0.79662)$ & $(4.62414, 1.87690, 1.08041)$ & 5.85260 & 0.000000 & 0.3991 \\ \hline
			19 & $(3.29841, 1.92953, 0.08085)$ & $(3.99411, -0.01038, 0.79662)$ & $(4.62414, 1.87690, 1.08041)$ & 5.85260 & 0.000000 & 0.6062 \\ \hline
			20 & $(3.29841, 1.92953, 0.08085)$ & $(3.99411, -0.01038, 0.79662)$ & $(4.62414, 1.87690, 1.08041)$ & 5.85260 & 0.000000 & 0.1956 \\ \hline
			21 & $(3.29841, 1.92953, 0.08085)$ & $(3.99411, -0.01038, 0.79662)$ & $(4.62414, 1.87690, 1.08041)$ & 5.85260 & 0.000000 & 0.7820 \\ \hline
			22 & $(3.29841, 1.92953, 0.08085)$ & $(3.99411, -0.01038, 0.79662)$ & $(4.62414, 1.87690, 1.08041)$ & 5.85260 & 0.000000 & 0.4510 \\ \hline
		\end{tabular}
		
	\end{sideways}
	\caption{Iteration-wise pptimal points, distances, and convergence rate for the generalized waist problem with three spehere, using Aitken acceleration}
	\label{tab:example2_Aitken_data}
\end{table}

\newpage

In~\ref{tab:psd-nag-comparison-disc}, we present detailed numerical results for~\ref{disc_example}, which involves three convex disc constraints. These results illustrate the deep impact of different step size choices on the performance of our projected subgradient descent (PSD) method. Specifically, the optimal step size \( \alpha = 2.07 \) was selected using the exact line search strategy detailed in~\ref{exactlinesearchtheorem}. The data clearly shows significant efficiency improvements: employing Aitken's acceleration  drastically reduces the iteration count from hundreds of thousands to merely 12 iterations and CPU time from over 1400 seconds to fractions of a second. This highlights the critical importance of carefully choosing step sizes and utilizing acceleration techniques to substantially decrease computational costs.

For comparison, Mordukhovich et al.~\cite{MoNaSa12} utilized a similar projected subgradient approach but with a traditional diminishing step size \( \alpha = \frac{1}{k} \) for solving the Heron problem constrained by four squares, requiring approximately 600,000 iterations for convergence. In stark contrast, our method, using the exact line search approach, achieves convergence in only 19 iterations, demonstrating the significant advantages of advanced step size determination techniques.
\begin{table}[htbp]
	\centering
	
	\begin{tabular}{l c r r}
		\toprule
		\textbf{Method} & \textbf{Tolerance} & \textbf{\# Iterations} & \textbf{CPU Time (s)} \\
		\midrule
		PSD ($\alpha = \frac{1}{k}$)                & $1\times 10^{-15}$ & 343406 & 1449.26313 \\
		PSD ($\alpha = \frac{1}{k}$)                & $1\times 10^{-14}$ & 176674 &   11.21159 \\
		PSD ($\alpha =\frac{1}{k}$)                & $1\times 10^{-13}$ &  88090 &    6.36560 \\
		PSD ($\alpha = \frac{1}{k}$)                & $1\times 10^{-12}$ &  38956 &    2.40782 \\
		PSD ($\alpha = \frac{1}{k}$)                & $1\times 10^{-11}$ &  17077 &    1.12642 \\
		PSD ($\alpha = \frac{1}{k}$)                & $1\times 10^{-10}$ &   7557 &    0.50496 \\
		PSD ($\alpha = 0.01$)             & $1\times 10^{-15}$ &   1877 &    0.11776 \\
		PSD ($\alpha = 0.1$)              & $1\times 10^{-15}$ &    205 &    0.02991 \\
		PSD ($\alpha = 2.07$)             & $1\times 10^{-15}$ &     19 &    0.01772 \\
		PSD with Aitken's Acceleration    & $1\times 10^{-15}$ &     12 &    0.00013 \\
		\midrule
		NAG ($\alpha = 0.1, t = 1$)       & $1\times 10^{-15}$ &    217 &    0.09325 \\
		NAG ($\alpha = 0.01, t = 1$)      & $1\times 10^{-15}$ &   1457 &    0.41198 \\
		NAG ($\alpha = 2.07, t = 1$)      & $1\times 10^{-15}$ &     21 &    0.04909 \\
		\bottomrule
	\end{tabular}
	\caption{Performance comparison of Projected Subgradient Descent (PSD) and Nesterov’s Accelerated Gradient (NAG) methods with varying step sizes for the generalized waist problem presented in~\ref{disc_example}}
	\label{tab:psd-nag-comparison-disc}
\end{table}

\section{Conclusion}\label{conclusion}	

In this paper, we have introduced a generalized version of the classical waist problem by replacing the lines with convex sets, thereby broadening its scope and applicability. We established both the existence and uniqueness of solutions under mild geometric conditions. We derived necessary and sufficient optimality conditions with a compelling geometric interpretation linked to physical laws such as reflection. Furthermore, we presented a robust computational framework based on projected subgradient descent method, supported by illustrative numerical examples that validate the efficiency and accuracy of our approach. The island analogy offers an intuitive understanding of the problem's practical implications, especially in network design and facility location. In the generalized waist problem, if there are only three convex sets as line segments making a triangle, the generalized waist problem becomes to the classical Fagnano problem. Thus, as a future scope, it may be challenging to investigate the generalized waist problem when they are not pairwise disjoint. Future work may also explore extensions to Hilbert spaces or non-convex constraints, offering promising directions for both theoretical exploration and practical application.

\bibliographystyle{plain}
\bibliography{Waist.bib}		
\end{document}